\documentclass[reqno,12pt]{amsart}

\title
[Lagrangian mean curvature flows in cotangent bundle]
{Generalized Lagrangian mean curvature flows: the cotangent bundle case}
\author[Knut Smoczyk]{\sc Knut Smoczyk$^\ast$}
\hyphenation{Ma-the-ma-tik}
\hyphenation{Welfen-gar-ten}
\hyphenation{Hannover}
\address{
$^\ast$Leibniz Universit\"at Hannover,
{F}akult\"at f\"ur Mathematik und Physik,
Institut f\"ur Mathematik,
Welfengarten 1,
30167 Hannover,
Germany}
\email{smoczyk@math.uni-hannover.de}
\author[Mao-Pei Tsui]{\sc Mao-Pei Tsui$^{\ast\ast}$}
\address{
$^{\ast\ast}$Department of Mathematics, National Taiwan University, Taipei 10617, Taiwan
 and
University of Toledo,
Department of Mathematics and Statistics,
2801 W. Bancroft St,
Toledo, Ohio 43606-3390}
\email{maopei@math.ntu.edu.tw, mao-pei.tsui@utoledo.edu}
\author[Mu-Tao Wang]{\sc Mu-Tao Wang$^{\ast\ast\ast}$}
\address{
$^{\ast\ast\ast}$Columbia University,
Department of Mathematics,
2990 Broadway,
New York, NY 10027}
\email{mtwang@math.columbia.edu}
\thanks{The first named author was supported by the DFG (German Research Foundation). 
The second named author  was partially  supported  by a  Collaboration Grant for Mathematicians from the Simons Foundation \#239677
and in part by Taiwan MOST grant 104-2115-M-002-001-MY2.
The third named author was partially supported by National Science
Foundation Grants DMS  1105483 and 1405152.}%

\subjclass[2000]{Primary 53C44;
}
\keywords{Lagrangian mean curvature flow, cotangent bundle}%
\date{June 3, 2015}

\usepackage{amscd,amsfonts,mathrsfs,amsthm,enumerate}
\usepackage{amssymb, amsmath}          
\usepackage{color}
\usepackage[shortalphabetic]{amsrefs}    

\parindent = 0 mm
\hfuzz     = 6 pt
\parskip   = 3 mm


\newtheorem{thm}{Theorem}

\newtheorem{pro}{Proposition}
\newtheorem{lem}{Lemma}
\newtheorem{dfn}{Definition}

\newtheorem{rem}{Remark}


\numberwithin{cor}{section}
\numberwithin{pro}{section} \numberwithin{dfn}{section}
\numberwithin{lem}{section}
\numberwithin{rem}{section}\numberwithin{equation}{section}

\def\real     #1{{\mathbb R^{#1}}}

\def\hn {{\widehat\nabla}}
\def\hT {{\widehat T}}

\def\hR {{\widehat R}}

\def\hH {{\widehat H}}

\def\hrho{{\widehat\rho}}

\def\dd       #1#2#3{{#1}_{#2#3}}

\def\uddd     #1#2#3#4#5{#1^#2_{\phantom{#2}#3#4#5}}

\newcommand{\C}{\mathbb C}

\begin{document}

\maketitle
\begin{abstract}

In \cite{sw2}, we defined a generalized mean curvature
vector field on any almost Lagrangian submanifold with respect to a torsion connection on an almost K\"ahler manifold. The short time existence of the corresponding parabolic flow was established. In addition, it was shown that the flow preserves the Lagrangian condition as long as the connection satisfies an Einstein condition. In this article, we show that the canonical
 connection on the cotangent bundle of any Riemannian manifold is an Einstein connection (in fact, Ricci flat). The generalized mean curvature vector on any Lagrangian submanifold is related to the Lagrangian angle defined by the phase of a parallel $(n, 0)$ form, just like the Calabi-Yau case. We also show that the corresponding Lagrangian mean curvature flow in cotangent bundles preserves the exactness and  the zero Maslov class conditions. At the end, we prove a long time existence and convergence result to demonstrate the stability of the zero section of the cotangent bundle of spheres.
\end{abstract}

\section{Introduction}
An almost K\"ahler manifold $(N,\omega, J)$ is a symplectic manifold $(N, \omega)$ with an almost complex structure $J$ such that
$g=\langle\cdot,\cdot\rangle=\omega(\cdot, J\cdot)$ becomes a Riemannian metric. Any symplectic manifold admits an almost K\"ahler structure. In particular, on the cotangent bundle $N:=T^*\Sigma$ of a Riemannian manifold $(\Sigma,\sigma)$, there is a canonical almost K\"ahler structure with respect to the base metric $\sigma$. The associated metric $g$ on $N$ (see Proposition \ref{metric}) is in general not K\"ahler and the associated almost complex structure $J$ is in general not integrable.  In addition, there is a connection $\hn$ with torsion on the tangent bundle of $N$ which is both metric and complex, and the horizontal and vertical distributions are parallel with respect to this connection. The torsion of this connection is completely determined by the Riemannian curvature tensor of the base manifold $(\Sigma,\sigma)$. In \cite{sw2}, we defined a notion called Einstein connection (see Definition \ref{einstein connection}) for a complex and metric connection on an almost K\"ahler manifold. In the article, we
show:
\begin{thm}\label{theo 1}
Let $(\Sigma,\sigma)$ be a Riemannian manifold and $(J, \omega, g)$ be the almost K\"ahler structure defined on the cotangent
bundle $N=T^*\Sigma$ with the canonical connection $\hn$ (see $\S2$). Then the  Ricci form $\hrho$ of  $\hn$ vanishes.
In particular, $\hn$ is an Einstein connection in the sense of \cite{sw2}.
\end{thm}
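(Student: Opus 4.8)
The plan is to compute the curvature of $\hn$ in a unitary frame adapted to the splitting $TN = H\oplus V$ into horizontal and vertical distributions, and to show that it takes values in $\mathfrak{so}(n)\subset\mathfrak{u}(n)$; since the Ricci form $\hrho$ is, up to a universal constant, the complex trace of this curvature, and elements of $\mathfrak{so}(n)$ are traceless, $\hrho$ will vanish identically.

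First I would fix a local orthonormal frame $\{e_i\}$ of $(\Sigma,\sigma)$ and lift it to $N=T^*\Sigma$, obtaining a horizontal frame $\{e_i^H\}$ and a vertical frame $\{e_i^V\}$. By construction of the almost K\"ahler structure these satisfy $Je_i^H=e_i^V$, $Je_i^V=-e_i^H$, together with $g(e_i^H,e_j^H)=g(e_i^V,e_j^V)=\pi^*\sigma_{ij}$ and $g(e_i^H,e_j^V)=0$. From the description of $\hn$ in $\S2$ I would read off its connection one-forms in this frame. The two properties recalled in the introduction are decisive: because $\hn$ makes $H$ and $V$ parallel it is block diagonal with respect to $TN=H\oplus V$, and because $\hn J=0$ the two blocks are intertwined by $J$ and hence identical. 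I therefore expect $\hn e_i^H=(\pi^*\omega_i^j)\,e_j^H$ and $\hn e_i^V=(\pi^*\omega_i^j)\,e_j^V$, where $\omega_i^j$ are the Levi-Civita connection forms of $(\Sigma,\sigma)$. The step that needs care is confirming, from the explicit formula for $\hn$ in $\S2$, that these are exactly the pullbacks with no extra fiber-direction terms; note that $\pi^*\omega_i^j$ annihilates vertical vectors, so the horizontal lifts are automatically parallel in the vertical directions.

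Next I would assemble the unitary frame $\epsilon_i:=\tfrac{1}{\sqrt 2}\bigl(e_i^H-\sqrt{-1}\,e_i^V\bigr)$ for $(TN,g,J)$. Since $\hn$ acts identically on the horizontal and vertical parts, a short structure-equation computation gives the curvature forms $\hat\Omega_i^j=\pi^*\Omega_i^j$, the pullbacks of the Riemannian curvature two-forms of $(\Sigma,\sigma)$. The essential observation is purely algebraic: $\hn$ restricts to a \emph{metric} connection on the real, rank-$n$ horizontal subbundle $H$, so its curvature on $H$ is valued in $\mathfrak{so}(n)$; that is, the matrix $(\hat\Omega_i^j)$ is real and antisymmetric, $\hat\Omega_i^j=-\hat\Omega_j^i$. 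Under the identification of $H\oplus JH$ with the Hermitian bundle in the frame $\{\epsilon_i\}$, this curvature sits inside $\mathfrak{u}(n)$ precisely as the traceless subalgebra $\mathfrak{so}(n)$, independently of the specific form of $\Omega_i^j$.

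Finally, unwinding the definition of the Ricci form from \cite{sw2} (Definition \ref{einstein connection}), $\hrho$ equals, up to a universal constant, the complex trace $\sqrt{-1}\sum_i\hat\Omega_i^i$ of the curvature, equivalently $\tfrac12\operatorname{tr}(J\circ\hR)$. Because $(\hat\Omega_i^j)$ is antisymmetric its diagonal entries vanish, so $\sum_i\hat\Omega_i^i=0$ and hence $\hrho=0$. This shows $\hn$ is Ricci flat, and in particular Einstein in the sense of \cite{sw2} with vanishing Einstein constant. The only genuine obstacle I anticipate is bookkeeping: reading off the connection forms of $\hn$ correctly from $\S2$ and matching the precise normalization of $\hrho$ in \cite{sw2}; once the curvature is seen to be $\mathfrak{so}(n)$-valued, the vanishing of the trace is immediate.
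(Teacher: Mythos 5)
Your proposal is correct and follows essentially the same route as the paper: in \S 2.3 the authors compute $\hR$ directly from \eqref{connection}, find that it preserves $\mathscr{H}$ and $\mathscr{V}$ and acts on both by the same matrix $-C^i_{\,jkl}$ (the pullback of the base curvature), and conclude $\hrho=0$ from \eqref{Ricci}. The only cosmetic difference is the final trace argument — the paper notes that $\hR(V,W)\circ J$ interchanges the two orthogonal distributions and is therefore traceless, while you invoke the real antisymmetry of the curvature matrix in the adapted unitary frame; both are valid and rest on the same structural facts.
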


Given a Lagrangian submanifold $M$ of an almost K\"ahler manifold, we also defined in \cite{sw2} a generalized mean curvature vector field $\widehat{H}$ in terms of the usual mean curvature vector
and the torsion $\hT$ of $\widehat{\nabla}$. In addition, we
proved that the restriction of  $i(\widehat{H})\omega$ to $M$ is a closed one form if  $\widehat{\nabla}$ is Einstein. Such a relation is known to be true on a Lagrangian
submanifold of a K\"ahler-Einstein manifold in which $\widehat{\nabla}$ is the Levi-Civita connection. This new characterization allows us to extend many known results regarding Lagrangian submanifolds of K\"ahler-Einstein manifolds to this more general setting. In particular, we found the cotangent bundle case to be analogous to the Calabi-Yau case in the following. Once we fix a Riemannian metric on the base, we can locally define the Lagrangian angle $\theta$ of a Lagrangian submanifold by taking the angle between the tangent space $T_pM$ and the tangent space of the fiber of $\pi: T^*\Sigma\rightarrow \Sigma $ (the vertical distribution) through any point $p\in M$. The generalized mean curvature vector $\hH$ is in fact dual to the form $d\theta$  which up to some constant is the Maslov
form with respect to  the canonical symplectic form on $T^*\Sigma$. There also exists a parallel $(n, 0)$-form $\Omega$  as a section of the canonical line bundle on any cotangent bundle. On a Lagrangian submanifold $M\subset N=T^*\Sigma$, we show that (Proposition \ref{Omega_angle}) the Lagrangian angle  is related to $\Omega$ by
\begin{equation}
e^{i\theta}=*(\Omega|_M)
\end{equation}
where $*$ is the Hodge star operator on $M$.

In \cite{sw2}, we also consider the generalized mean curvature flow with respect to $\hH$ (see a different generalized Lagrangian mean curvature flow studied by Behrndt \cite{Be}).  This is a family of moving submanifolds $M_t$, $t\in [0, T)$ such that the velocity vector at
each point is given by the generalized mean curvature vector of $M_t$ at that point. We prove that the parabolic flow is well-posed and preserves the Lagrangian condition \cite{sw2}.
The above interpretation of $\widehat{H}$ in terms of the Lagrangian angle indeed gives a heuristic reason why the latter holds on the linear level.
Therefore, the flow gives a canonical Lagrangian deformation in cotangent bundles.

When $\Sigma$ is compact and orientable, a conjecture that is often attributed to Arnol'd \cite{Arnold, Gromov, Lalonde-Sikorav} asks
if a compact, exact, orientable, embedded Lagranian $M$ in $T^*\Sigma$ can be deformed through exact Lagrangians to the zero section. We refer to \cite{fss} for the current development towards the conjecture from the perspective of symplectic topology. In relation to this question, we prove in this paper:

\begin{thm}\label{zero_Maslov} Suppose that $\Sigma$ is a compact Riemannian manifold.
 Suppose $M_t$, $t\in [0, T)$ is a smooth generalized Lagrangian mean curvature flow of compact Lagrangians in $T^*\Sigma$, if $M_0$ is exact and of vanishing Maslov class, so is $M_t$ for any $t\in [0, T)$.
 \end{thm}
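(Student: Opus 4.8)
The plan is to treat the two conditions separately: the vanishing of the Maslov class is preserved for homotopy-theoretic reasons, while exactness requires the flow equation itself. Throughout, write $F_t:M\to N$ for the flow, so that $\partial_t F_t=\hH$, and let $\lambda$ denote the tautological one-form on $N=T^*\Sigma$, which satisfies $d\lambda=\omega$. Since each $M_t$ is Lagrangian, $d(F_t^*\lambda)=F_t^*\omega=0$, so $F_t^*\lambda$ is closed and determines a class $[F_t^*\lambda]\in H^1(M;\R)$; exactness of $M_t$ is precisely the statement that this class vanishes. Recall also from Proposition \ref{Omega_angle} that the phase $e^{i\theta}=*(\Omega|_M)$ defines a smooth map $p_t:M\to S^1$, and that vanishing of the Maslov class means $p_t$ is null-homotopic, equivalently that $\theta_t$ lifts to a globally defined $\R$-valued function on $M$.

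First I would establish that the Maslov class is preserved. Because $\Omega$ is a fixed parallel (hence globally defined and nowhere vanishing) $(n,0)$-form, the phase maps $p_t=*(\Omega|_{M_t})$ form a family depending smoothly on $t\in[0,T)$. The homotopy class $[p_t]\in[M,S^1]\cong H^1(M;\Z)$ is a homotopy invariant, so along a continuous family it is independent of $t$; since $M_0$ has vanishing Maslov class we have $[p_0]=0$, whence $[p_t]=0$ for every $t$. Consequently $\theta_t$ admits a global $\R$-valued lift for all $t$, i.e.\ $M_t$ has vanishing Maslov class. This step uses only the smoothness of the flow together with the existence of the parallel form $\Omega$ guaranteed by Theorem \ref{theo 1}, and not any special feature of the mean curvature flow.

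Next I would prove that exactness is preserved, and here the flow equation enters. Using Cartan's formula for the Lie derivative along the velocity field $\hH$ of the time-dependent immersion,
\begin{equation*}
\frac{\partial}{\partial t}\,F_t^*\lambda
=F_t^*\big(\mathcal{L}_{\hH}\lambda\big)
=F_t^*\big(d\,i_{\hH}\lambda+i_{\hH}\,d\lambda\big)
=d\big(F_t^*(i_{\hH}\lambda)\big)+F_t^*\big(i_{\hH}\omega\big).
\end{equation*}
The first term is exact on $M$. For the second, $F_t^*(i_{\hH}\omega)$ is by definition the generalized mean curvature one-form $i(\hH)\omega|_{M_t}$, which by the Einstein property (Theorem \ref{theo 1}) is closed and, as recalled in the introduction, equals $d\theta_t$. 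By the previous paragraph $\theta_t$ is a globally defined function on $M$, so $d\theta_t$ is exact. Hence $\tfrac{\partial}{\partial t}F_t^*\lambda$ is exact for every $t$, giving $\tfrac{d}{dt}[F_t^*\lambda]=0$ in $H^1(M;\R)$. Since $M_0$ is exact we have $[F_0^*\lambda]=0$, and therefore $[F_t^*\lambda]=0$ for all $t\in[0,T)$; that is, $M_t$ is exact. Compactness of $\Sigma$, hence of each $M_t$, ensures all the forms and cohomology classes involved are well-behaved.

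The main obstacle I anticipate is the coupling between the two conditions: the exactness computation produces the extra term $d\theta_t$, which is only exact once the Maslov class is known to vanish, so the two parts cannot be disentangled and the Maslov step must come first. The remaining care is purely technical, namely justifying the Lie-derivative identity for a flow whose velocity $\hH$ is defined only along the moving submanifold (so that $i_{\hH}\lambda$ and $i_{\hH}\omega$ are interpreted as a function and a form along $M_t$), and checking that the relation $i(\hH)\omega|_{M_t}=d\theta_t$ holds with exactly the normalization in which the phase of $\Omega$ computes the Lagrangian angle.
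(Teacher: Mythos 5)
Your proof is correct, and the exactness half is essentially the paper's argument: Cartan's formula applied to the Liouville form gives $\tfrac{\partial}{\partial t}F_t^*\lambda=d(\lambda(\hH))\pm\mu$ (this is \eqref{lio}), and once $\theta$ is globally single-valued the term $\pm\mu=\pm d\theta$ is exact, so $[F_t^*\lambda]$ is constant. Where you genuinely diverge is the Maslov-class step. The paper differentiates the evolution equation \eqref{lag_ang} for $\theta$ to conclude that $\mu=d\theta$ changes only by an exact form, hence $[\mu_t]$ is constant in $H^1(M;\R)$; you instead observe that the globally defined phase map $p_t=*(\Omega|_{M_t}):M\to S^1$ varies continuously in $t$, so its class in $[M,S^1]\cong H^1(M;\Z)$ is constant. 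Your route is more elementary and makes transparent that this half of the theorem uses only smoothness of the family plus the existence of the parallel form $\Omega$, not the specific PDE; the paper's route via \eqref{lag_ang} is the one that survives in settings where one only knows $\mu$ is closed (an Einstein connection on a general almost K\"ahler target) without a globally defined phase. Two small points to tidy: the paper's convention is $\omega=-d\lambda$ (see \eqref{dlambda}) and $i(\hH)\omega|_M=-\mu$ (see \eqref{eq maslov}), so your signs should be adjusted, though this does not affect exactness of the relevant term; and the identity $\mu=d\theta$ is Proposition \ref{eq_H-theta} (parallelism of $\Omega$), rather than a direct consequence of Theorem \ref{theo 1}.
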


This is proved by computing the evolution equation of the Lagrangian angle and the Liouville form along the flow.
 That the connection $\widehat{\nabla}$ is metric, complex and preserves the horizontal and vertical distributions is critical in studying the geometry of this flow.

The flow thus presents a natural candidate for the deformation of Lagrangian submanifolds in cotangent bundles. However, it is  known that there are many analytic difficulties even in the original Lagrangian mean curvature flow case, see  \cite{ne2, ne3}. As a first step towards understanding this flow, we focus on the graphical case in this article, i.e. when $M_t$ is defined by $du(x, t)$ for local potentials $u(x, t)$ defined on $(\Sigma, \sigma)$. In particular, we show in Proposition \ref{GMCF-u}
that the flow is equivalent to the following fully non-linear parabolic equation for $u$ ({\it the special Lagrangian evolution equation})
\begin{equation}\label{eq_u}\frac{\partial u}{\partial t}= \frac{1}{\sqrt{-1}}\ln \frac{\det( \sigma_{ij}+\sqrt{-1} u_{;ij} )}{\sqrt{\det \sigma_{ij}}\sqrt{\det(\sigma_{ij}+u_{;ik}  \sigma^{kl} u_{;lj} )}}\end{equation} where $u_{;ij}$ is  the Hessian of $u(x, t)$ with respect to the fixed metric $\sigma_{ij}$.
We prove the following stability theorem of the zero section when the base manifold is a standard round sphere.

\begin{thm}
When $(\Sigma, \sigma)$ is a standard round sphere of constant sectional curvature, the zero section in $T^*\Sigma$ is stable under the generalized Lagrangian mean curvature flow.
\end{thm}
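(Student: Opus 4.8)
The plan is to analyze the scalar equation \eqref{eq_u} for the potential $u$, for which the zero section corresponds to the constant solutions $u\equiv\mathrm{const}$ (there $u_{;ij}=0$ and the right hand side vanishes). Diagonalizing the Hessian $u_{;ij}$ with respect to $\sigma_{ij}$ and writing $\lambda_1,\dots,\lambda_n$ for its eigenvalues, the right hand side of \eqref{eq_u} is exactly the Lagrangian angle $\theta=\sum_i\arctan\lambda_i$, so that $\partial_t u=\theta$. I would phrase \emph{stability} as: there is $\varepsilon>0$ such that whenever $du_0$ is $\varepsilon$-small in $C^{2,\alpha}(\Sigma)$, the flow (which is well-posed by \cite{sw2}) exists for all $t\in[0,\infty)$ and $du(\cdot,t)$ converges, exponentially and in $C^\infty$, to $0$, i.e. $M_t$ converges to the zero section. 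Since adding a constant to $u$ does not move $M_t$, convergence of $u$ to \emph{some} constant is precisely convergence of $M_t$ to the zero section.

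First I would linearize \eqref{eq_u} at a constant. Since $\tfrac{d}{d\lambda}\arctan\lambda|_{0}=1$, the linearization is the heat operator $\partial_t-\Delta_\sigma$, and because $\arctan\lambda-\lambda=O(\lambda^3)$ the nonlinear remainder $\theta-\Delta_\sigma u$ is cubic in the Hessian --- a very favorable structure for a stability argument. The crucial input from the hypothesis is the geometry of the round sphere: $\Delta_\sigma$ has constants as its kernel (exactly the gauge direction) together with a spectral gap $\lambda_1>0$, and the Ricci curvature satisfies $\mathrm{Ric}_\sigma=(n-1)\kappa\,\sigma>0$ for sectional curvature $\kappa>0$. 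It is this positivity, rather than mere compactness of $\Sigma$, that will be used to upgrade linear decay to nonlinear decay.

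The heart of the proof is a coupled set of a priori estimates, maintained by a continuity (bootstrap) argument on $T^*=\sup\{t:\ \|du\|_{C^{2,\alpha}}\text{ stays }\varepsilon\text{-small on }[0,t]\}$. For the gradient I would run the maximum principle on $|\nabla u|^2$: its evolution under $\partial_t u=\theta$ is governed by the linearized operator $L=a^{ij}\nabla_i\nabla_j$ with $a^{ij}=\partial\theta/\partial u_{;ij}$ positive definite and close to $\sigma^{ij}$ while the Hessian is small, and the Bochner/commutator term involves $-2a^{ij}R_{ikjl}\nabla^k u\,\nabla^l u$, which on a space form of curvature $\kappa>0$ equals $-2\kappa\big((\mathrm{tr}_\sigma a)\,|\nabla u|^2-a^{ij}\nabla_i u\,\nabla_j u\big)\le -c\,|\nabla u|^2$ for $a^{ij}$ near $\sigma^{ij}$; hence $\sup|\nabla u|^2$ obeys a differential inequality forcing exponential decay. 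In parallel I would bound the Hessian, the genuinely hard step: using the concavity of $\theta$ as a function of $u_{;ij}$ (the special Lagrangian structure) together with the positive curvature of $\sigma$ and careful bookkeeping of the curvature/torsion terms coming from $\hn$, I would show $|u_{;ij}|$ stays small and in fact decays, keeping the equation uniformly parabolic. Once $C^0$, $C^1$ and $C^2$ are controlled, parabolic Schauder estimates give uniform $C^\infty$ bounds, and with the exponential decay of $\nabla u$ and $u_{;ij}$ one obtains $C^\infty$ convergence of $M_t$ to the zero section; the decaying estimates simultaneously close the bootstrap, forcing $T^*=\infty$.

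The main obstacle is the second-order (Hessian) estimate for this fully nonlinear equation: one must keep $u_{;ij}$ uniformly bounded --- and ideally decaying --- so that \eqref{eq_u} remains non-degenerate and the cubic nonlinearity stays subordinate to the linear decay. This is where the concavity of the angle $\theta$ and the positive curvature of the round sphere must be combined. The exactness and vanishing Maslov class preserved by the flow (Theorem \ref{zero_Maslov}) guarantee that $u$ and $\theta$ remain single-valued throughout, so that these global estimates and the identification $e^{i\theta}=*(\Omega|_M)$ of Proposition \ref{Omega_angle} stay valid along the entire flow.
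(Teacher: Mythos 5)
Your overall architecture --- exponential decay of $|\nabla u|^2$ from the positive curvature of the sphere, plus a preserved smallness condition on the Hessian, plus higher-order estimates and a bootstrap --- matches the paper's proof in outline, and your Bochner argument for $|\nabla u|^2$ is essentially Proposition \ref{eta-ineq} with $c=1$. But the step you yourself flag as ``the genuinely hard step'' is exactly where the proposal has a real gap, and the tool you invoke to fill it does not work. The operator $\theta=\sum_i\arctan\lambda_i$ is \emph{not} concave (nor convex) in the Hessian near $u_{;ij}=0$: already $\arctan$ has an inflection point at the origin, and concavity of the special Lagrangian operator holds only on special branches (e.g.\ all $\lambda_i\ge 0$, or large phase). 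In the flat case one repairs this by a unitary rotation that converts small $C^2$ norm into a convexity condition, but --- as the paper emphasizes in the introduction --- no such transformation exists over a curved base, so the Krylov/Evans--Krylov $C^{2,\alpha}$ theory is not directly applicable and your Schauder bootstrap cannot be started this way. Saying that ``the concavity of $\theta$ together with positive curvature'' will keep $|u_{;ij}|$ small and decaying is a wish, not an argument.

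The paper's actual mechanism is the content you would need to supply. First, the quantity $\chi=\det G_{ij}/\det\sigma_{ij}=\prod_i(1+\lambda_i^2)$ is shown to satisfy $\chi\le 1+\epsilon$ for all time: in the evolution \eqref{eq_rho_sphere} of $\rho=\tfrac12\ln\chi$ the quadratic form in the third derivatives has coefficients $-1+\lambda_p\lambda_q-\lambda_k(\lambda_p+\lambda_q)$, which are negative for small $\lambda$, and the curvature term is non-positive precisely because the sectional curvature is positive; this gives the preserved pinching and hence the uniform parabolicity you need. Second --- and this is the crux --- the third-derivative quantity $\Theta^2$ of \eqref{eq_Theta} (the analogue of the second fundamental form) is bounded by applying the maximum principle to $\chi^p\Theta^2$: combining $\partial_t\rho-(G^{-1})^{kl}\rho_{;kl}\le(-1+3\epsilon)\Theta^2$ with the inequality \eqref{diff_ineq}, one chooses $\epsilon$ small and then $p>0$ so that the coefficient $4p^2\epsilon^2+2(-1+3\epsilon)p+C_1(1+\epsilon)$ of $\Theta^4$ is negative, whence $\chi^p\Theta^2$ is uniformly bounded. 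This is the substitute for the missing $C^{2,\alpha}$ theory: it is a direct $C^3$ estimate for $u$, after which standard arguments give all higher bounds, long-time existence, and, together with the decay of $\vartheta=|du|^2$, smooth convergence to the zero section. Your proposal contains no analogue of the $\chi^p\Theta^2$ computation, and without it the bootstrap does not close.
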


Theorem 3 holds when the standard round sphere is replaced by a compact Riemannian manifold of positive sectional curvature. For the detailed statement and precise condition, see section \S 7. In particular, we show that the generalized Lagrangian mean curvature flow of any Lagrangian submanifold with small local potential in $C^2$ norm (the smallness can be effectively estimated) exists for all time and converges
to the zero section at infinity.
The case when the base metric is flat i.e. $\sigma_{ij}=\delta_{ij}$  is studied in \cite{sw1, Zhang, cch, ccy}.
In these cases, one can use the unitary group action to convert the condition of small $C^2$ norm into a convexity condition
(see section 4 in \cite{sw1} for this transformation). The convexity condition implies the standard $C^{2,\alpha}$ estimate of Krylov \cite{Krylov}
is applicable. In our case the base manifold is no longer flat and no such transformation exists, and we need to deal with the $C^{2, \alpha}$ estimate directly. A similar flow for holomorphic line bundles was considered in \cite{Jacob-Yau}.

The article is organized as follows. In \S 2, the almost K\"ahler geometry of cotangent bundles is reviewed and Theorem \ref{theo 1} is proved.
In \S 3, we review the geometry of Lagrangian submanifolds in the cotangent bundle, in particular we recall the generalized mean curvature vector. There we derive the relation between the Lagrangian angle and the parallel $(n, 0)$-form.
In  \S 4, we study the evolution equations under the generalized mean curvature flow in the cotangent bundle and prove Theorem \ref{zero_Maslov}.
In \S 5, we investigate the graphical case in which the Lagrangian submanifold is given by the graph of a closed one-form on $\Sigma$. In \S 6, we compute the evolution equations of different geometric quantities that will be used in the proof of the stability theorem.
In \S 7, we prove the
stability theorem Theorem 3.
Readers who are more interested in the PDE aspect of \eqref{eq_u} can move directly to \S 5.

The third named author is grateful to Pengfei Guan for raising the question about how to generalize the special Lagrangian equation to a Riemannian manifold.
He would like to thank  Tristan Collins, Adam Jacob, Conan Leung, and Xiangwen Zhang  for their interests and helpful discussions.
\section{Review of the geometry of cotangent bundles of Riemannian manifolds}

\subsection{The almost K\"ahler structure $\bf (\omega, J,g)$  on $\bf T^*\Sigma$}

We first review the geometry of cotangent bundles, some of which can be found in \cite{va} or \cite{yi}.

Let $(\Sigma,\sigma)$ be an $n$-dimensional Riemannian manifold with Riemannian metric $\sigma$. Let $\{q^j\}_{j=1\cdots n}$ be a local coordinate system on $\Sigma$.  Let $D$ be the covariant derivative (connection) and $\Lambda_{ij}^k$ be the Christoffel  symbols of $\sigma_{ij}$ with
\[D_{\frac{\partial}{\partial q^i}}\frac{\partial}{\partial q^j}=\Lambda_{ij}^k\frac{\partial}{\partial q^k}.\]

Let $\uddd Cijkl$ be the curvature tensor of $\sigma_{ij}$ with
\[C\left(\frac{\partial}{\partial q^k}, \frac{\partial}{\partial q^l}\right)\frac{\partial}{\partial
q^j}=D_{\frac{\partial}{\partial q^k}}D_{\frac{\partial}{\partial q^l}}\frac{\partial}{\partial q^j}-D_{\frac{\partial}{\partial q^l}}D_{\frac{\partial}{\partial q^k}}\frac{\partial}{\partial q^j}=\uddd Cijkl\frac{\partial}{\partial q^i}.\]
$\uddd Cijkl$ can be expressed by the Christoffel symbols by
\begin{equation}\label{curvature}\uddd Cijkl=\frac{\partial}{\partial q^k} \Lambda^i_{jl}-\frac{\partial}{\partial q^l}\Lambda^i_{jk}+\Lambda^i_{pk} \Lambda^p_{jl}-\Lambda^i_{pl}\Lambda^p_{jk}.
\end{equation}

Let $N:=T^*\Sigma$ be the cotangent bundle of $\Sigma$. We take the local coordinates  $\{q^i, p_i\}_{i=1\cdots n}$ on $T^*\Sigma$ such that on overlapping charts with coordinates $q^i, p_i$ and $\tilde{q}^i, \tilde{p}_i$, the transformation rule
\[\tilde{p}_i=\frac{\partial q^j}{\partial \tilde{q}^i}p_j\] holds.
Denote the Liouville form by $\lambda=p_i dq^i$ so that the canonical symplectic form by $\omega=\sum_{i=1}^n dq^i\wedge dp_i$
is given by
\begin{equation}\label{dlambda}
\omega=-d\lambda\,.
\end{equation}

Recall that $\{dq^i, \theta_i\}_{ i=1\dots n}$ form a basis for $T^*(T^*\Sigma)$ where
\begin{equation}\label{theta_i}
\theta_i=dp_i-\Lambda_{ih}^k p_k dq^h\,,\quad i=1,\dots, n
\end{equation}
 that is dual to the basis $\{X_i, \frac{\partial}{\partial p_i}\}_{i=1\cdots n}$ for $T(T^*\Sigma)$ where
\begin{equation}\label{X_i}
X_i=\frac{\partial}{\partial q^i}+\Lambda_{ih}^k p_k \frac{\partial}{\partial p_h}\,,\quad i=1,\dots,n.
\end{equation}

Denote
\[X^i=\sigma^{ik} X_k\\,\quad i=1,\dots  ,n.\]
The bundle projection $\pi:T^*\Sigma\to\Sigma$ then satisfies
$$d\pi(X_i)=\frac{\partial}{\partial q^i}\,,\quad d\pi\left(\frac{\partial}{\partial p_i}\right)=0\,.$$
Thus the connection $D$ generates two distributions $\mathscr{H}$, $\mathscr{V}$ in $T(T^*\Sigma)$, called the horizontal
and vertical distributions.
We summarize the properties in the following:

\begin{pro}\label{metric}
Let $N:=T^*\Sigma$ for a Riemannian manifold $(\Sigma, \sigma)$.  The horizontal distribution $\mathscr{H}\subset TN$ is spanned by $X^i$ and the vertical distribution
$\mathscr{V}$ by $\frac{\partial}{\partial p_i}$.
In terms of these bases, the Riemannian metric $g=\langle \cdot, \cdot \rangle$ on
$N$ (or on the tangent bundle $TN$ of $N$) satisfies
\[\left\langle \frac{\partial}{\partial p_i}, \frac{\partial}{\partial p_j}\right\rangle=\sigma^{ij},\quad\left\langle X^i, \frac{\partial}{\partial p_j}\right\rangle =0, \quad\text{and}\quad
\langle X^i, X^j\rangle=\sigma^{ij}.\]

In terms of $\theta_i$ and $dq^i$, this metric is
\[g(\cdot,\cdot)=\langle\cdot, \cdot \rangle=\sigma^{ij}\theta_i \otimes\theta_j+\sigma_{ij} dq^i \otimes dq^j.\]

The almost complex structure $J$ on $TN$ is defined by \[\omega(\cdot, \cdot)=\langle J\cdot, \cdot\rangle \] and it satisfies
\begin{equation}\label{J}
JX^i=\frac{\partial}{\partial p_i},\quad  J \frac{\partial}{\partial p_i}=-X^i, \quad\text{and}\quad Jdq^i=-\sigma^{ij} \theta_j.
\end{equation}

\end{pro}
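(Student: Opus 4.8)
The plan is to read this proposition as a coordinate verification of the canonical almost K\"ahler structure, taking the metric $g$ to be \emph{defined} by the Sasaki-type recipe: the connection $D$ splits $TN=\mathscr{H}\oplus\mathscr{V}$ into the horizontal distribution $\mathscr{H}=\operatorname{span}\{X_i\}$ and the vertical distribution $\mathscr{V}=\operatorname{span}\{\partial/\partial p_i\}$, and $g$ is declared to make this splitting orthogonal, to restrict to $\pi^*\sigma$ on $\mathscr{H}$ (so $\langle X_i,X_j\rangle=\sigma_{ij}$), and to restrict to the fiber cometric on $\mathscr{V}$ (so $\langle\partial/\partial p_i,\partial/\partial p_j\rangle=\sigma^{ij}$). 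Since both blocks $\sigma_{ij}$ and $\sigma^{ij}$ are positive definite, $g$ is genuinely Riemannian, and everything reduces to linear algebra in the frame $\{X_i,\partial/\partial p_i\}$ and its dual coframe $\{dq^i,\theta_i\}$.

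First I would record the duality between $\{dq^i,\theta_i\}$ and $\{X_i,\partial/\partial p_i\}$, namely $dq^i(X_j)=\delta^i_j$, $dq^i(\partial/\partial p_j)=0$, $\theta_i(X_j)=0$, and $\theta_i(\partial/\partial p_j)=\delta_i^j$, each obtained by substituting \eqref{theta_i} and \eqref{X_i}. The only non-obvious one is $\theta_i(X_j)=0$, which hinges on the symmetry $\Lambda_{ij}^k=\Lambda_{ji}^k$ of the Christoffel symbols, causing $\Lambda_{ji}^l p_l$ and $\Lambda_{ij}^k p_k$ to cancel. Granting the defining inner products, the relations for the raised frame follow from $X^i=\sigma^{ik}X_k$: one gets $\langle X^i,X^j\rangle=\sigma^{ik}\sigma^{jl}\sigma_{kl}=\sigma^{ij}$ and $\langle X^i,\partial/\partial p_j\rangle=0$, so in particular $\{X^i\}$ spans the same space $\mathscr{H}$ as $\{X_i\}$. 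Because a metric with components $g_{ab}$ in a frame $\{e_a\}$ is $g=g_{ab}\,e^a\otimes e^b$ in the dual coframe $\{e^a\}$, reading off the components in $\{X_i,\partial/\partial p_i\}$ immediately yields $g=\sigma_{ij}\,dq^i\otimes dq^j+\sigma^{ij}\,\theta_i\otimes\theta_j$.

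For the almost complex structure I would first rewrite $\omega$ in the coframe. From $\omega=-d\lambda=dq^i\wedge dp_i$ together with $dp_i=\theta_i+\Lambda_{ih}^k p_k\,dq^h$, the Christoffel contribution is $\Lambda_{ih}^k p_k\,dq^i\wedge dq^h$, which again vanishes by the symmetry of $\Lambda$ against the antisymmetry of $dq^i\wedge dq^h$, leaving the clean expression $\omega=dq^i\wedge\theta_i$. Then $J$ is determined by $\omega(V,W)=\langle JV,W\rangle$, i.e.\ $JV$ is the metric dual of the one-form $\omega(V,\cdot)$. Evaluating $\omega(X^i,\cdot)$ and $\omega(\partial/\partial p_i,\cdot)$ on the frame and comparing them with $\langle\partial/\partial p_i,\cdot\rangle$ and $\langle -X^i,\cdot\rangle$ gives $JX^i=\partial/\partial p_i$ and $J\partial/\partial p_i=-X^i$, whence $J^2=-\operatorname{Id}$ is immediate and $J$ is a bona fide almost complex structure compatible with $g$ and $\omega$. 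The last identity $Jdq^i=-\sigma^{ij}\theta_j$ follows by extending $J$ to $T^*N$ by duality, i.e.\ $(J\alpha)(V)=\alpha(JV)$, and evaluating on the frame.

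The computations are routine once the bookkeeping is fixed; the only genuine subtlety, and it appears twice, is the cancellation forced by the symmetry $\Lambda_{ij}^k=\Lambda_{ji}^k$, which is precisely what makes both $\theta_i(X_j)=0$ and $\omega=dq^i\wedge\theta_i$ hold and which geometrically reflects the torsion-freeness of the Levi-Civita connection of $\sigma$. The other point requiring care is the index conventions, in particular that $X^i=\sigma^{ik}X_k$ carries a raised index, so $\langle X^i,X^j\rangle=\sigma^{ij}$ rather than $\sigma_{ij}$; once these are set, nothing deeper is needed.
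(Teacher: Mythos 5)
Your verification is correct, and it is essentially the only proof available: the paper states Proposition \ref{metric} without argument, treating it as the standard description of the Sasaki metric and canonical almost complex structure on $T^*\Sigma$ (citing Vaisman, Yano--Ishihara, and Sasaki), so your coordinate check in the adapted frame $\{X_i,\partial/\partial p_i\}$ and coframe $\{dq^i,\theta_i\}$ supplies exactly what the paper leaves implicit. You correctly isolate the two points where something actually has to be checked --- the duality relation $\theta_i(X_j)=0$ and the identity $\omega=dq^i\wedge\theta_i$, both of which rest on the symmetry $\Lambda^k_{ij}=\Lambda^k_{ji}$ --- and the remaining frame computations for $g$ and $J$ are routine and accurate.
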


$g$ is the Sasaki metric \cite{Sasaki} on the cotangent bundle $N=T^*\Sigma$.

\subsection{The connection, the curvature, and the torsion}
Now we recall the  connection $\hn$ (see \cite{va}) on $T(T^*\Sigma)$ that is compatible with the Riemannian metric $\langle\cdot, \cdot\rangle$ and the almost complex structure $J$ (i.e. the covariant derivative $\hn$ commutes with $J$). $\hn$ is defined by
\begin{equation}\label{connection}
\hn X^i=-\Lambda^i_{jk} dq^j \otimes X^k\,\,\text{and}\,\, \hn \frac{\partial}{\partial  p_i}=-\Lambda^i_{jk} dq^j \otimes \frac{\partial}{\partial p_k}.
\end{equation}

From these, we can compute the covariant derivative of any vector field. For example, by \eqref{X_i}, we have
\begin{equation}\label{connection2}\widehat{\nabla}_{\frac{\partial}{\partial p_j}}\frac{\partial}{\partial q^i}=- \Lambda^j_{ik}\frac{\partial}{\partial p_k}.\end{equation}

We notice that this connection preserve the horizontal and the vertical distribution. Also $X^i$ and $\frac{\partial}{\partial p_i}$ are parallel in the fiber direction.

Let $\hR$ be the curvature tensor of $\hn$. Since $\hn$ is complex and metric, the Ricci form $\hrho$ is given by
\begin{equation}\label{Ricci}\hrho(V,W):=\frac{1}{2}\sum_{\alpha=1}^{2n}g(\hR(V,W)Je_\alpha,e_\alpha)
=\frac{1}{2}\sum_{\alpha=1}^{2n}\omega(\hR(V,W)e_\alpha,e_\alpha)\,,\end{equation}
where $e_\alpha$ is an arbitrary orthonormal basis of $TN$.

We recall the definition of an Einstein connection from \cite{sw2}:
\begin{dfn}\label{einstein connection}
A metric and complex connection $\hn$ on an almost K\"ahler manifold $(N,\omega,J,g)$
is called {\sl Einstein}, if the Ricci form of $\hn$ satisfies
$$\hrho=f\omega$$
for some smooth function $f$ on $N$.
\end{dfn}

We denote the projection of $TN$ onto the horizontal distribution $\mathscr{H}$ by $\pi_1$ and the projection onto the vertical distribution
$\mathscr{V}$  by $\pi_2$. In terms of $dq^i$ and $\theta_i$, we have
\[\pi_1= dq^i\otimes X_i \quad\text{and}\quad \pi_2=\theta_i\otimes\frac{\partial}{\partial p_i}.\]

Since $J$ interchanges $\mathscr{H}$ and $\mathscr{V}$ we get
\begin{equation}\label{eq cot commute}
J\pi_1=\pi_2J\,,\quad J\pi_2=\pi_1 J\,.
\end{equation}

With respect to these structures, we define:
\begin{dfn}
The $n$-form $\Omega$ is defined as
\begin{equation}\label{Omega_0}
\Omega=\sqrt{\det \sigma_{ij}} (dq^1-\sqrt{-1}J dq^1)\wedge\cdots\wedge (dq^n-\sqrt{-1} Jdq^n).
\end{equation}
$\Omega$ can be viewed as an $(n, 0)$-form in the sense that
\begin{equation}\label{hol}
\Omega(JV_1,V_2,\dots, V_n)=\sqrt{-1} \Omega(V_1, \dots, V_n).
\end{equation}
\end{dfn}

\begin{pro} The $(n, 0)$ form $\Omega$  on $N=T^*\Sigma$ is parallel with respect  to the connection  $\widehat{\nabla}$.
\end{pro}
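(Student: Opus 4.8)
The plan is to prove $\widehat{\nabla}\Omega=0$ by differentiating the explicit product in \eqref{Omega_0} with the Leibniz rule and observing that the scalar $\sqrt{\det\sigma_{ij}}$ is present precisely to cancel the trace of the connection. The first step is to dualize \eqref{connection} to obtain the action of $\widehat{\nabla}$ on the relevant one-forms. Writing $X_j=\sigma_{jm}X^m$ and using that the Levi-Civita connection of $\sigma$ is metric (so that $\partial_l\sigma_{jm}=\Lambda^p_{lj}\sigma_{pm}+\Lambda^p_{lm}\sigma_{jp}$), one finds $\widehat{\nabla}_{X_l}X_j=\Lambda^p_{lj}X_p$ and $\widehat{\nabla}_{\partial/\partial p_l}X_j=0$; dualizing against $\{dq^i,\theta_i\}$ and using that $\widehat{\nabla}$ preserves $\mathscr H$ and $\mathscr V$ then gives
\[
\widehat{\nabla}\,dq^i=-\Lambda^i_{jk}\,dq^j\otimes dq^k.
\]

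Next I would pass to the complex coframe. Set $\zeta^i:=dq^i-\sqrt{-1}\,Jdq^i$, so that $\Omega=\sqrt{\det\sigma_{ij}}\;\zeta^1\wedge\cdots\wedge\zeta^n$. The essential structural input is that $\widehat{\nabla}$ commutes with $J$: since $\widehat{\nabla}J=0$ as an endomorphism of $TN$, the transposed endomorphism $J$ on $T^*N$ is parallel as well, whence $\widehat{\nabla}(Jdq^i)=J(\widehat{\nabla}dq^i)$. Combining this with the formula above and the identity $Jdq^k=-\sigma^{kl}\theta_l$ from \eqref{J}, the $J$ passes into the second tensor slot and reassembles each factor, giving
\[
\widehat{\nabla}\zeta^i=-\Lambda^i_{jk}\,dq^j\otimes\bigl(dq^k-\sqrt{-1}\,Jdq^k\bigr)=-\Lambda^i_{jk}\,dq^j\otimes\zeta^k.
\]
Thus the $\zeta^i$ are acted on by the same connection matrix $-\Lambda^i_{jk}\,dq^j$ as the $dq^i$.

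Finally I would apply the Leibniz rule to the product $\sqrt{\det\sigma_{ij}}\,\zeta^1\wedge\cdots\wedge\zeta^n$. In the sum over the factors, every term in which $\widehat{\nabla}\zeta^i$ produces a $\zeta^k$ with $k\neq i$ is killed by the wedge with the remaining $\zeta$'s, so only the trace $-\Lambda^i_{ji}\,dq^j$ survives and contributes $-\Lambda^i_{ji}\,\sqrt{\det\sigma_{ij}}\,dq^j\otimes(\zeta^1\wedge\cdots\wedge\zeta^n)$. On the other hand $\sqrt{\det\sigma_{ij}}$ is a function of the base variables $q$ only, and the standard identity $\Lambda^i_{ji}=\partial_j\ln\sqrt{\det\sigma_{ij}}$ shows that differentiating the scalar factor produces exactly $+\Lambda^i_{ji}\,\sqrt{\det\sigma_{ij}}\,dq^j\otimes(\zeta^1\wedge\cdots\wedge\zeta^n)$. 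These two contributions cancel, so $\widehat{\nabla}\Omega=0$. I expect the only delicate point to be the first step, namely the careful dualization that yields $\widehat{\nabla}\,dq^i=-\Lambda^i_{jk}\,dq^j\otimes dq^k$ (which rests on the metricity of $\sigma$ and on the invariance of the two distributions) together with the verification that $J$-parallelism descends from $TN$ to $T^*N$; once these are in hand, the cancellation is forced by the $\sqrt{\det\sigma_{ij}}$ normalization.
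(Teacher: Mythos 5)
Your proposal is correct and follows essentially the same route as the paper: the paper's proof consists of computing $\widehat{\nabla}dq^i=-\Lambda^i_{ms}dq^m\otimes dq^s$ (by evaluating against $X^k$ and $\partial/\partial p_k$, rather than by your dualization, but with the same result) together with $d\sqrt{\det\sigma_{ij}}=\Lambda^k_{sk}\sqrt{\det\sigma_{ij}}\,dq^s$, leaving the remaining steps implicit. You have merely spelled out what the paper leaves to the reader — that $J$-parallelism gives $\widehat{\nabla}\zeta^i=-\Lambda^i_{jk}dq^j\otimes\zeta^k$ for the complex coframe and that the surviving trace term cancels against the derivative of the volume factor — and those details are all correct.
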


\begin{proof}
We begin by computing $\hn dq^i$. Consider
\begin{eqnarray}
(\hn dq^i)(X^k)
&=&d[dq^i(X^k)]-dq^i(\widehat{\nabla} X^k)\nonumber\\
&=&d( \sigma^{ki})+\Lambda_{pq}^k \sigma^{pi}dq^q\nonumber\\
&=&-\sigma^{km}\Lambda_{ms}^idq^s.
\end{eqnarray}

On the other hand, $(\hn dq^i)\left(\frac{\partial}{\partial p_k}\right)=0$. Therefore,
\begin{equation}\label{nabla dxi}
\hn dq^i=-\Lambda_{ms}^i dq^m \otimes dq^s.
\end{equation}

The proposition follows by putting the together the above formula and the following standard calculation
\[d\sqrt{\det \sigma_{ij}}=\Lambda_{sk}^k \sqrt{\det \sigma_{ij}}dq^s.\]
\end{proof}

\subsection{Proof of Theorem \ref{theo 1}}

By equation \eqref{connection}, the curvature $\widehat{R}$ of $\hn$ is computed as
\begin{eqnarray}
\widehat{R}\left(\frac{\partial}{\partial q^k}, \frac{\partial}{\partial q^l}\right) X^i
&=&
\hn_{\frac{\partial}{\partial q^k}} \hn_\frac{\partial}{\partial q^l} X^i-\hn_{\frac{\partial}{\partial q^l}} \hn_\frac{\partial}{\partial q^k} X^i\nonumber\\
&=&-\left(\frac{\partial}{\partial q^k} \Lambda^i_{jl}-\frac{\partial}{\partial q^l}\Lambda^i_{jk}-\Lambda^i_{pl}\Lambda^p_{jk}
+\Lambda^i_{pk} \Lambda^p_{jl} \right)X^j.\nonumber
\end{eqnarray}
Likewise,
\[\widehat{R}\left(\frac{\partial}{\partial q^k}, \frac{\partial}{\partial q^l}\right) \frac{\partial}{\partial p_i}=-\left(\frac{\partial}{\partial q^k} \Lambda^i_{jl}-\frac{\partial}{\partial q^l}\Lambda^i_{jk}-\Lambda^i_{pl}\Lambda^p_{jk}
+\Lambda^i_{pk} \Lambda^p_{jl} \right)\frac{\partial}{\partial p_j}.\]

Therefore, we have
\begin{equation}
\widehat{R}\left(\frac{\partial}{\partial q^k}, \frac{\partial}{\partial q^l}\right) X^i=-\uddd Cijkl X^j
\text{ and }
\widehat{R}\left(\frac{\partial}{\partial q^k}, \frac{\partial}{\partial q^l}\right) \frac{\partial}{\partial p_i}=-\uddd Cijkl\frac{\partial}{\partial p_j}.
\end{equation}

In view of these relations, the Ricci form $\hrho$, see \eqref{Ricci}, vanishes since $J$ is an isomorphism between the vertical and horizontal
distributions.

\section{The generalized mean curvature and the Lagrangian angle in cotangent bundles}
In the last sections we have seen that the cotangent bundle $N=T^*\Sigma$ of a Riemannian
manifold admits a naturally defined almost K\"ahler structure $(\omega,J,g)$ and a
canonical connection $\hn$ that is metric, symplectic and  has torsion $\hT$,
essentially given by the curvature of the underlying base manifold $(\Sigma,\sigma)$. Moreover the
Ricci form $\widehat\rho$ of $\hn$ vanishes. From now on we will assume that $(N,\omega,J,g,\hn)$ is
such a cotangent bundle.

We now recall the definition of the generalized mean curvature vector field of a Lagrangian immersion $F:M\to N$ and relate it to the Lagrangian angle through the holomorphic $n$-form $\Omega$ introduced in the previous section. We shall identify $M$ with the image of the Lagrangian immersion and refer $M$ as a Lagrangian submanifold when there is no confusion. Let $e_i, i=1\cdots n$ be an orthonormal basis with respect to the induced metric on $M$ by the immersion $F$. We recall the generalized mean curvature form on $M$ is
\begin{equation}\label{mcform}\mu_i=\sum_k \langle \hn_{e_i} e_k , J e_k\rangle\end{equation} and the generalized mean curvature vector $\hH$ is
\begin{equation}\label{mcvector}\hH=\sum_i \mu_iJe_i\,.\end{equation}

Consequently, the generalized mean curvature vector is dual to the mean curvature form in the sense that
\begin{equation}\label{eq maslov}
i(\hH)\omega|_M=-\mu.
\end{equation}

We recall that the Lagrangian angle of a Lagrangian subspace $L_1$ in $\C^n$ with respect to another fixed Lagrangian subspace $L_0$ is given by the argument of $\det U$ where $U$ is a unitary $n\times n$ matrix such that $L_1=UL_0$. Effectively,  we choose an orthonormal basis $e^a_1,\dots, e^a_n$ for $L_a$, $a\in\{0,1\}$, and set
$$\epsilon_i^a=\frac{1}{\sqrt{2}}(e^a_i-\sqrt{-1} Je^a_i)$$
to be the associated holomorphic basis. If $\epsilon_i^1=\gamma_i^j \epsilon_j^0$, then $\det \gamma_i^j$ is the Lagrangian angle of $L_1$ with respect to $L_0$. We derive a formula for the Lagrangian angle in terms of arbitrary bases.
\begin{lem}\label{angle}
Suppose $(V, \langle\cdot, \cdot\rangle)$ is a $2n$-dimensional (real) inner product space with a compatible almost complex structure $J$ (i.e.
$J$ is an isometry and $J^2=-I$). Let $L_0$ be a fixed Lagrangian subspace of $V$ spanned by $\bar{v}_1, \dots ,\bar{v}_n$. Suppose $L_1$ is another Lagrangian subspace spanned by $v_1, \dots, v_n$. Suppose $v_i=\sum_{j=1}^n \alpha_{ij}\bar{v}_j+\sum_{j=1}^n\beta_{ij} J\bar{v}_j$ for $i=1,\dots ,n$. Then the Lagrangian angle $\theta$ of $L_1$ with respect to $L_0$ is the argument of $\det (\alpha_{ij}+\sqrt{-1} \beta_{ij})$. In fact, they are related by
\begin{equation}\label{eq theta}\frac{\det (\alpha_{ij}+\sqrt{-1} \beta_{ij}) \sqrt{\det\langle \bar{v}_i, \bar{v}_j\rangle}}{\sqrt{\det\langle v_i, v_j\rangle}}=e^{\sqrt{-1}\theta}.\end{equation} \end{lem}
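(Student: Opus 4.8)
The plan is to reduce the statement for the arbitrary bases $v_i,\bar v_i$ to the definition of the Lagrangian angle via orthonormal holomorphic bases, using the single algebraic device $\epsilon(w):=\tfrac{1}{\sqrt2}\bigl(w-\sqrt{-1}\,Jw\bigr)$ attached to each $w\in V$. The role of $\epsilon$ is precisely to turn the real structure constants $\alpha_{ij},\beta_{ij}$ into the complex matrix $\alpha_{ij}+\sqrt{-1}\,\beta_{ij}$ whose determinant appears in \eqref{eq theta}. First I would record the one-line identity $\epsilon(Jw)=\sqrt{-1}\,\epsilon(w)$, which follows from $J^2=-I$. I would also observe that if $e^0_1,\dots,e^0_n$ is an orthonormal basis of $L_0$, then $\{e^0_j,Je^0_j\}$ is an orthonormal basis of $V$ (because $L_0$ Lagrangian gives the orthogonal splitting $V=L_0\oplus JL_0$), so the $n$ vectors $\epsilon(e^0_j)$ lie in, and form a basis of, the $+\sqrt{-1}$-eigenspace $E_+$ of $J$ on $V\otimes\mathbb C$; in particular they are linearly independent over $\mathbb C$, which legitimizes reading off matrix identities from equalities of holomorphic frames.

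Then comes the core computation. Applying $\epsilon$ to $v_i=\sum_j\alpha_{ij}\bar v_j+\sum_j\beta_{ij}J\bar v_j$ and using $\epsilon(J\bar v_j)=\sqrt{-1}\,\epsilon(\bar v_j)$ gives
\[\epsilon(v_i)=\sum_j\bigl(\alpha_{ij}+\sqrt{-1}\,\beta_{ij}\bigr)\,\epsilon(\bar v_j),\]
so the complex matrix $A=(\alpha_{ij}+\sqrt{-1}\,\beta_{ij})$ is exactly the transition matrix from the holomorphic frame $\{\epsilon(\bar v_j)\}$ to $\{\epsilon(v_i)\}$. Next I would pass to orthonormal bases by writing $\bar v_i=\sum_k P_{ik}e^0_k$ and $v_i=\sum_k Q_{ik}e^1_k$ with real matrices $P,Q$, so that $\epsilon(\bar v_i)=\sum_k P_{ik}\epsilon^0_k$ and $\epsilon(v_i)=\sum_k Q_{ik}\epsilon^1_k$. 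Since $e^0_k,e^1_k$ are orthonormal, the Gram matrices factor as $\langle\bar v_i,\bar v_j\rangle=(PP^{\mathsf T})_{ij}$ and $\langle v_i,v_j\rangle=(QQ^{\mathsf T})_{ij}$, whence $(\det P)^2=\det\langle\bar v_i,\bar v_j\rangle$ and $(\det Q)^2=\det\langle v_i,v_j\rangle$.

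Finally I would combine these with the definition $\epsilon^1_i=\gamma_i^{\,j}\epsilon^0_j$, where $\det\gamma=e^{\sqrt{-1}\theta}$. Substituting the two expressions for $\epsilon(v_i)$ into the span of $\{\epsilon^0_k\}$ and invoking the linear independence established above yields the matrix identity $Q\gamma=AP$, hence
\[e^{\sqrt{-1}\theta}=\det\gamma=\det A\cdot\frac{\det P}{\det Q},\]
which is \eqref{eq theta} once the square roots are inserted. The step that needs genuine care — and which I expect to be the only real obstacle — is the sign of $\det P$ and $\det Q$: each is pinned down only up to sign by its Gram determinant, and in fact $\det\gamma$ itself flips sign under an orientation-reversing change of orthonormal basis, so the angle $\theta$ is only meaningful once orientations are fixed. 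The resolution is to orient $L_0$ and $L_1$ and require $\bar v_\bullet,v_\bullet,e^0_\bullet,e^1_\bullet$ to be positively oriented, so that $\det P>0$ and $\det Q>0$ agree with the positive square roots $\sqrt{\det\langle\bar v_i,\bar v_j\rangle}$ and $\sqrt{\det\langle v_i,v_j\rangle}$; with this convention the signs match and the stated formula holds.
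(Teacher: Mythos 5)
Your proof is correct and is essentially the ``direct calculation'' that the paper leaves to the reader: passing to the holomorphic frames $\epsilon(\cdot)$, identifying $\alpha+\sqrt{-1}\,\beta$ as the transition matrix, and factoring the Gram determinants through orthonormal bases. Your observation about fixing orientations so that $\det P$ and $\det Q$ agree with the positive square roots is a genuine point the paper glosses over (consistent with its remark that $\theta$ is only defined up to additive constants), and it is handled correctly.
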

\begin{proof} Direct calculation.\end{proof}

Note that by this formula the Lagrangian angle is not uniquely defined but it is defined up to adding
an integer multiple of $2\pi$.

Suppose $F: M\rightarrow N=T^*\Sigma$ is a Lagrangian immersion.
We consider the Lagrangian angles with respect to the horizontal distribution $\mathscr{H}$ and the vertical distribution $\mathscr{V}$, which differ by a constant. For our purpose, we shall use $\theta$ to denote the Lagrangian angle with respect to the horizontal distribution.

\begin{pro} Suppose a Lagrangian submanifold  of $N=T^*\Sigma$ is given by $F:M\rightarrow N$. Let $\{F_i\}_{i=1\cdots n}$ be an arbitrary basis
tangential to $M$.
Then the Lagrangian angle $\theta$ with respect to the horizontal distribution is
\begin{eqnarray}
\sqrt{-1}\theta&=&\ln \det \left(\langle F_i, X^j\rangle +\sqrt{-1} \langle F_i, \frac{\partial}{\partial p_j} \rangle\right)\nonumber\\
&&+\frac{1}{2}\ln\det \sigma_{ij}-\frac{1}{2}\ln \det G_{ij},\nonumber
\end{eqnarray} where $G_{ij}=\langle F_i, F_j\rangle$.
\end{pro}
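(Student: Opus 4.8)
The plan is to apply Lemma \ref{angle} with the ambient inner product space $V = T_pN$ at a point $p \in M$, taking the fixed Lagrangian $L_0$ to be the horizontal distribution $\mathscr{H}$ and $L_1 = T_pM$. The natural choice of basis for $L_0 = \mathscr{H}$ is $\bar v_j = X^j$, whose pairings are $\langle X^i, X^j\rangle = \sigma^{ij}$ by Proposition \ref{metric}; and by the defining relation \eqref{J} we have $JX^j = \frac{\partial}{\partial p_j}$, so that $J\bar v_j$ are exactly the vertical vectors. The basis for $L_1$ is the arbitrary tangent frame $\{F_i\}$. Thus the whole proof reduces to identifying the coefficients $\alpha_{ij}, \beta_{ij}$ in the decomposition $F_i = \sum_j \alpha_{ij}\bar v_j + \sum_j \beta_{ij} J\bar v_j$ and then substituting into \eqref{eq theta}.

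First I would compute the coefficients. Since $\{X^j, \frac{\partial}{\partial p_j}\}$ is an orthogonal system with $\langle X^i, X^j\rangle = \langle \frac{\partial}{\partial p_i}, \frac{\partial}{\partial p_j}\rangle = \sigma^{ij}$ and $\langle X^i, \frac{\partial}{\partial p_j}\rangle = 0$, I can recover $\alpha_{ij}, \beta_{ij}$ by pairing $F_i$ against these vectors and inverting the metric $\sigma^{ij}$. Concretely, $\langle F_i, X^k\rangle = \sum_j \alpha_{ij}\sigma^{jk}$ gives $\alpha_{ij} = \sum_k \langle F_i, X^k\rangle \sigma_{kj}$, and similarly $\beta_{ij} = \sum_k \langle F_i, \frac{\partial}{\partial p_k}\rangle \sigma_{kj}$ using $J\bar v_j = \frac{\partial}{\partial p_j}$. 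Hence the combined matrix factors as
\begin{equation}
\alpha_{ij} + \sqrt{-1}\,\beta_{ij} = \sum_k \left(\langle F_i, X^k\rangle + \sqrt{-1}\,\langle F_i, \tfrac{\partial}{\partial p_k}\rangle\right)\sigma_{kj},\nonumber
\end{equation}
so that taking determinants splits off a factor of $\det(\sigma_{kj}) = 1/\det(\sigma^{ij})$.

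Next I would feed this into Lemma \ref{angle}. The formula \eqref{eq theta} reads $e^{\sqrt{-1}\theta} = \det(\alpha_{ij} + \sqrt{-1}\beta_{ij})\sqrt{\det\langle\bar v_i,\bar v_j\rangle}/\sqrt{\det\langle v_i, v_j\rangle}$. Here $\det\langle\bar v_i,\bar v_j\rangle = \det(\sigma^{ij}) = 1/\det\sigma_{ij}$ and $\det\langle v_i, v_j\rangle = \det G_{ij}$. Combining the determinant of the matrix factorization above (which contributes $\det\left(\langle F_i, X^k\rangle + \sqrt{-1}\langle F_i, \frac{\partial}{\partial p_k}\rangle\right)\cdot \det(\sigma_{kj})$) with the Gram-matrix factor, the powers of $\det\sigma_{ij}$ should collect to a single factor of $\sqrt{\det\sigma_{ij}}$ as claimed. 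Taking the logarithm of $e^{\sqrt{-1}\theta}$ then yields the displayed identity.

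The main obstacle is purely bookkeeping: keeping track of whether $\sigma$-factors appear as $\sigma^{ij}$ or $\sigma_{ij}$ and with what power, since the frame $\{X^j\}$ is defined with raised indices and its Gram matrix is $\sigma^{ij}$ rather than $\sigma_{ij}$. I would verify the exponent of $\det\sigma_{ij}$ carefully: the factorization supplies $\det\sigma_{kj}$, the $\sqrt{\det\langle\bar v_i,\bar v_j\rangle}$ term supplies $(\det\sigma_{ij})^{-1/2}$, and the net contribution must reconcile to the single $+\tfrac12\ln\det\sigma_{ij}$ in the statement. Once this accounting is done consistently, the rest is a direct substitution into the already-proven lemma.
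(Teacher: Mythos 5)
Your proof is correct and follows essentially the same route as the paper: both apply Lemma \ref{angle} with $L_0=\mathscr{H}$ and $L_1=T_pM$ and then read off the formula from \eqref{eq theta}. The only (immaterial) difference is your choice $\bar v_j=X^j$, which pushes the metric factor into $\alpha_{ij},\beta_{ij}$ and into $\sqrt{\det\langle\bar v_i,\bar v_j\rangle}=(\det\sigma_{ij})^{-1/2}$, whereas the paper takes $\bar v_j=\sigma_{jl}X^l$ so that $\alpha_{ij}=\langle F_i,X^j\rangle$, $\beta_{ij}=\langle F_i,\frac{\partial}{\partial p_j}\rangle$ directly and $\det\langle\bar v_i,\bar v_j\rangle=\det\sigma_{ij}$; the two bookkeepings give the same result.
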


\begin{proof}

Each $F_i$ can be expressed in terms of $X^j$ and $p_j$,

\begin{equation}\label{eq F-i}F_i=\left\langle F_i, X^k\right\rangle \sigma_{kj}X^j
+\left\langle F_i, \frac{\partial}{\partial p_k} \right\rangle \sigma_{kj}\frac{\partial}{\partial p_j}.
\end{equation}

Since $JX^i=\frac{\partial}{\partial p_i}$, by Lemma \ref{angle}, the Lagrangian angle $\theta$ with respect to the horizontal
distribution spanned by $\{X^i\}_{i=1\cdots n}$ is the argument of
\[\det \left(\langle F_i, X^k\rangle +\sqrt{-1} \langle F_i, \frac{\partial}{\partial p_k} \rangle\right). \]
Using $v_i=F_i$, $\overline{v}_i=\sigma_{il}X^l$, $\alpha_{ij}=\langle F_i, X^j\rangle$
, $\beta_{ij}=\left\langle F_i, \frac{\partial}{\partial p_j} \right\rangle$, $\langle \bar{v}_i, \bar{v}_j\rangle=\sigma_{ij}$, we obtain
the formula from \eqref{eq theta}.

\end{proof}

On the other hand, the Lagrangian angle with respect to the vertical distribution spanned by $\{\frac{\partial}{\partial p_j}\}_{j=1\cdots}$ is
the argument of
\[\det \left(    \langle F_i, \frac{\partial}{\partial p_k} \rangle-\sqrt{-1}  \langle F_i, X^k\rangle \right). \] Therefore, the two Lagrangian
angles differ by a multiply of $\frac{\pi}{2}$.

Another way to compute the Lagrangian angle with respect to the horizontal
distribution is to consider the restriction of the $(n, 0)$
form $\Omega$ to $M$.

\begin{pro} \label{Omega_angle}Suppose $\Omega$ is the $n$-form given by \eqref{Omega_0}, then for a Lagrangian immersion $F:M\to T^*\Sigma$,
\[*(\Omega|_M)=e^{\sqrt{-1} \theta},\] where $*$ is the Hodge star on $M$.
\end{pro}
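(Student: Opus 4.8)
The plan is to evaluate both sides of the asserted identity on an arbitrary tangent frame $\{F_i\}_{i=1\cdots n}$ of $M$ and match the outcome against the formula for $\sqrt{-1}\theta$ proved in the previous proposition. Since $M$ is $n$-dimensional, the Hodge star sends the top-degree form $\Omega|_M$ to the (complex-valued) function characterized by $\Omega|_M=*(\Omega|_M)\,\mathrm{vol}_M$, where $\mathrm{vol}_M$ is the induced Riemannian volume form; equivalently, evaluating on $\{F_i\}$,
\[
*(\Omega|_M)=\frac{\Omega(F_1,\dots,F_n)}{\sqrt{\det G_{ij}}},\qquad G_{ij}=\langle F_i,F_j\rangle,
\]
because $\mathrm{vol}_M(F_1,\dots,F_n)=\sqrt{\det G_{ij}}$. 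Thus the entire task reduces to computing the numerator $\Omega(F_1,\dots,F_n)$.

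To do this I would first rewrite $\Omega$ from \eqref{Omega_0} in terms of the coframe $\{dq^i,\theta_i\}$. Setting $\eta^i:=dq^i-\sqrt{-1}\,Jdq^i$ and using $Jdq^i=-\sigma^{ij}\theta_j$ from \eqref{J}, one gets $\eta^i=dq^i+\sqrt{-1}\,\sigma^{ij}\theta_j$, hence $\Omega=\sqrt{\det\sigma_{ij}}\,\eta^1\wedge\cdots\wedge\eta^n$. Since the evaluation of a wedge of one-forms on a frame is a determinant,
\[
\Omega(F_1,\dots,F_n)=\sqrt{\det\sigma_{ij}}\,\det\big[\eta^i(F_k)\big].
\]
The heart of the argument is the single pairing $\eta^i(F_k)$. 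Here I would expand $F_k$ by \eqref{eq F-i}, recall that $\{dq^i,\theta_i\}$ is dual to $\{X_i,\partial/\partial p_i\}$ (see \eqref{theta_i} and \eqref{X_i}), and use $X^j=\sigma^{jk}X_k$ to pair the coframe against the horizontal and vertical parts of $F_k$. The $\sigma$-raisings and lowerings telescope, yielding the clean expression
\[
\eta^i(F_k)=\langle F_k,X^i\rangle+\sqrt{-1}\,\Big\langle F_k,\tfrac{\partial}{\partial p_i}\Big\rangle.
\]

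Assembling these, and noting that interchanging the index roles $i\leftrightarrow k$ leaves the determinant unchanged, I obtain
\[
\Omega(F_1,\dots,F_n)=\sqrt{\det\sigma_{ij}}\,\det\!\Big(\langle F_i,X^j\rangle+\sqrt{-1}\,\big\langle F_i,\tfrac{\partial}{\partial p_j}\big\rangle\Big).
\]
Dividing by $\sqrt{\det G_{ij}}$ then reproduces exactly the expression for $e^{\sqrt{-1}\theta}$ obtained by exponentiating the formula of the previous proposition, which finishes the proof. I expect the only genuinely delicate point to be the index bookkeeping in the step computing $\eta^i(F_k)$—in particular verifying that the $\sigma^{ij}$ raising $\theta_j$ cancels the $\sigma_{mj}$ produced when $\theta_j$ is paired with the vertical part of $F_k$—together with the standing orientation convention on $M$ under which $*$ of a top form is its ratio to $\mathrm{vol}_M$ (the angle $\theta$ being in any case defined only modulo $2\pi$).
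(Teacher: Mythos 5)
Your proposal is correct and follows essentially the same route as the paper: both write $*(\Omega|_M)=\Omega(F_1,\dots,F_n)/\sqrt{\det G_{ij}}$, compute the single pairing $(dq^k-\sqrt{-1}\,Jdq^k)(F_i)=\langle F_i,X^k\rangle+\sqrt{-1}\langle F_i,\partial/\partial p_k\rangle$ via \eqref{J}, and then match the resulting determinant against the Lagrangian-angle formula of the preceding proposition. You merely spell out the index bookkeeping that the paper leaves implicit.
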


\begin{proof}
Given any basis $\{F_i\}_{i=1,\cdots n}$ tangential to $M$,
$*(\Omega|_M)=\frac{\Omega (F_1, \dots, F_n)}{\sqrt{\det\dd Gij}}$ where $G_{ij}=\langle F_i, F_j\rangle$.  We calculate
\[(dq^k-\sqrt{-1}J dq^k)(F_i)=\langle F_i, X^k\rangle+\sqrt{-1}\left\langle F_i, \frac{\partial}{\partial p_k}\right\rangle\] where equation (\ref{J}) is used.
\end{proof}

\begin{pro}\label{eq_H-theta}For a Lagrangian immersion $F:M\to T^*\Sigma$,
the generalized mean curvature vector and the Lagrangian angle are related by \[\widehat{H}=J\nabla \theta\] where $\nabla$ is the gradient operator on $M$ with respect to the induced metric on $M$.
\end{pro}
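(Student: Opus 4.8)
The plan is to first identify the generalized mean curvature form $\mu$ with $d\theta$, and then read off the vector relation directly from the definition $\hH=\sum_i\mu_iJe_i$. I would work in a local orthonormal frame $e_1,\dots,e_n$ of $TM$, so that $\{Je_1,\dots,Je_n\}$ is an orthonormal frame of the normal bundle $J(TM)$ and, by Proposition \ref{Omega_angle}, the restriction of the parallel $(n,0)$-form satisfies $\Omega(e_1,\dots,e_n)=e^{\sqrt{-1}\theta}$.

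First I would differentiate this scalar identity in the direction $e_i$. Since the previous proposition gives $\hn\Omega=0$, the Leibniz rule reduces the derivative of $\Omega$ to the derivatives of the frame: $e_i\big(\Omega(e_1,\dots,e_n)\big)=\sum_k\Omega(e_1,\dots,\hn_{e_i}e_k,\dots,e_n)$, with $e_k$ replaced by $\hn_{e_i}e_k$ in the $k$-th slot. I would then split each $\hn_{e_i}e_k$ into its tangential part $\sum_l\langle\hn_{e_i}e_k,e_l\rangle e_l$ and its normal part $\sum_l\langle\hn_{e_i}e_k,Je_l\rangle Je_l$.

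The crux is showing that only the diagonal index $l=k$ survives in the alternating form $\Omega$. For the tangential contribution, any term with $l\neq k$ repeats a frame vector and vanishes, while the diagonal term $\langle\hn_{e_i}e_k,e_k\rangle$ vanishes because $\hn$ is metric and the frame is orthonormal (differentiating $\langle e_k,e_k\rangle=1$). For the normal contribution, the $(n,0)$ property \eqref{hol} — extended to every slot by the antisymmetry of $\Omega$ — pulls $J$ out with a factor $\sqrt{-1}$, and again only $l=k$ avoids a repeated $e_k$; this produces $\sqrt{-1}\,\langle\hn_{e_i}e_k,Je_k\rangle\,e^{\sqrt{-1}\theta}$. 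Summing over $k$ and comparing with $e_i(e^{\sqrt{-1}\theta})=\sqrt{-1}(d\theta)(e_i)e^{\sqrt{-1}\theta}$, the definition \eqref{mcform} of $\mu_i$ yields $\mu_i=(d\theta)(e_i)$, that is, $\mu=d\theta$.

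Finally I would substitute into the definition of the generalized mean curvature vector. Since $\mu_i=(d\theta)(e_i)=\langle\nabla\theta,e_i\rangle$, the $J$-linearity of $J$ gives $\hH=\sum_i\mu_iJe_i=J\big(\sum_i\langle\nabla\theta,e_i\rangle e_i\big)=J\nabla\theta$, which is the desired formula; this is consistent with the duality \eqref{eq maslov} since $i(J\nabla\theta)\omega|_M=-d\theta=-\mu$. The main obstacle is the bookkeeping in the third step: verifying rigorously that inserting a single vector into one slot of the $(n,0)$-form reduces, after the tangential/normal split, to the single diagonal coefficient $\langle\hn_{e_i}e_k,Je_k\rangle$, and that metricity of $\hn$ eliminates the tangential diagonal term. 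One should also observe that although $\theta$ is only defined modulo $2\pi$, its differential $d\theta$ — and hence the conclusion — is well-defined.
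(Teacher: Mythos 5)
Your proposal is correct and follows essentially the same route as the paper: differentiate $*\Omega=e^{\sqrt{-1}\theta}$ using $\hn\Omega=0$, reduce each slot to the normal part of $\hn_{e_i}e_k$ via metricity and antisymmetry, and extract $\sqrt{-1}\langle\hn_{e_i}e_k,Je_k\rangle$ from the $(n,0)$ property to get $\mu=d\theta$, hence $\hH=J\nabla\theta$. The only difference is that you spell out the bookkeeping (off-diagonal vanishing, extension of \eqref{hol} to all slots) that the paper leaves implicit.
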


\begin{proof} In view of  \eqref{mcform}, \eqref{mcvector}, and Proposition \ref{Omega_angle}, it suffices to prove that \begin{equation}\label{dlogOmega} d\ln (*\Omega)= \sqrt{-1} \mu,\end{equation} where $*\Omega=*(\Omega|_M)$.
Let $e_1,\dots,e_n$ be an orthonormal frame tangential to $M$. Using the fact that $\Omega$ is parallel with respect to $\widehat{\nabla}$, we compute
\[e_i(*\Omega)=\Omega(\widehat{\nabla}_{e_i} e_1, e_2, \dots, e_n)+\cdots +\Omega(e_1,\dots, e_{n-1}, \widehat{\nabla}_{e_i} e_n).\] Since the tangential part of $\widehat{\nabla}_{e_i} e_1$ only involves $e_2, \dots, e_n$, the first term becomes $\Omega((\widehat{\nabla}_{e_i} e_1)^\perp, e_2, \dots, e_n)$. Likewise for other terms. On the other hand, we have $(\widehat{\nabla}_{e_i} e_k)^\perp=\langle \widehat{\nabla}_{e_i} e_k, Je_l\rangle Je_l$.
Using the property that $\Omega$ is a holomorphic $n$-form, see equation (\ref{hol}), we derive
\[\Omega((\widehat{\nabla}_{e_i} e_1)^\perp, e_2, \dots, e_n)=\sqrt{-1}\langle\widehat{\nabla}_{e_i} e_1, Je_1\rangle *\Omega.\] Summing up from $i=1,\dots, n$, we arrive at the desired formula.
\end{proof}

\section{The generalized mean curvature flow in cotangent bundles}

We  derive evolution equations along the generalized mean curvature flow in cotangent bundles for the Lagrangian angle
and the Liouville form.

Before that, let us recall some facts about the torsion connection from \cite{sw2}. In Lemma 2 in \cite{sw2}, it is shown that the torsion connection $\widehat{\nabla}$ and the Levi-Civita connection $\widetilde{\nabla}$ on $N$ are related by
\begin{equation}\label{connection_rel}2\langle \hn_X Y-\widetilde{\nabla}_X Y, Z\rangle
=\langle \hT(X, Y), Z\rangle+\langle \hT(Z, X), Y\rangle
+\langle \hT(Z, Y), X\rangle.\end{equation}

In particular, for a tangent vector field $X$ on a Lagrangian submanifold $M$, we have
\begin{equation}\label{eq divergence}
\sum_{k=1}^n\langle\hn_{e_k} X, e_k\rangle=div_M X+\sum_{i=1}^n \langle \hT(e_k, X), e_k\rangle,
\end{equation}
where $\{e_k\}_{k=1,\cdots, n}$ is an orthonormal basis of $TM$.

We recall that a smooth family of
Lagrangian immersions
\[F:M \times [0, T)\rightarrow N=T^*\Sigma\] satisfies the generalized mean curvature flow, if
\begin{equation}
\frac{\partial F}{\partial t}(x, t)=\hH(x, t)\,,\quad\text{and}\quad F(M, 0)=M_0
\end{equation}
where $\hH(x,t)$ is the generalized mean curvature vector of the almost Lagrangian submanifold
$M_t=F(M, t)$ at $F(x,t)$. It was proved in \cite{sw2} that the generalized  mean curvature flow preserves the Lagrangian condition.
In the following calculations, we fix a local coordinate system $(x^1, \cdots, x^n)$ on the domain $M$ and consider $F_i=\frac{\partial F}{\partial x^i}=dF(\frac{\partial}{\partial x^i}), i=1, \cdots, n$ a tangential basis on the moving submanifolds $M_t$.

\begin{lem} Along the generalized mean curvature flow $M_t$ in the cotangent bundle of a Riemannian
manifold,  the Lagrangian angle $\theta$ satisfies
\begin{equation}\label{lag_ang}\frac{\partial}{\partial t} \theta
=\Delta \theta+\sum_{k=1}^n \left(\langle \hT(J\hH,e_k), e_k\rangle
-\langle J\widehat{T}(\hH, e_k), e_k\rangle\right),\end{equation} for any orthonormal basis $\{e_k\}_{k=1\cdots n}$ on $M_t$.
\end{lem}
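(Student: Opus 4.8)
The plan is to differentiate the identity $e^{\sqrt{-1}\theta}=*(\Omega|_M)$ of Proposition \ref{Omega_angle} in time, exploiting that $\Omega$ is parallel for $\hn$ and that $\hn$ is metric and complex. Writing $*\Omega=\Omega(F_1,\dots,F_n)/\sqrt{\det G_{ij}}$ with $G_{ij}=\langle F_i,F_j\rangle$, the first step is to record the commutation rule for the pullback connection along the flow: since $[\partial_t,\partial_{x^i}]=0$ on the domain and $\partial_t F=\hH$, the torsion of $\hn$ forces
\[
\hn_{\partial_t}F_i=\hn_{F_i}\hH+\hT(\hH,F_i).
\]
Because $\hn$ is metric and $\Omega$ is parallel, both $\partial_t(\det G)$ and $\partial_t\Omega(F_1,\dots,F_n)$ then reduce to expressions that are linear in $\hn_{F_i}\hH$ and $\hT(\hH,F_i)$, so $\sqrt{-1}\,\partial_t\theta=\partial_t\ln(*\Omega)$ becomes a sum of such inner products.

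I would next evaluate everything at a fixed point and time in an orthonormal tangent frame $\{e_k\}$, so that $\det G=1$ and $\Omega(e_1,\dots,e_n)=e^{\sqrt{-1}\theta}$. Inserting a vector $W$ (either $\hn_{e_j}\hH$ or $\hT(\hH,e_j)$) into the $j$-th slot of $\Omega$ and decomposing $W$ into its tangential and Lagrangian-normal parts, the holomorphicity relation \eqref{hol}, applied slot by slot via the total antisymmetry of $\Omega$, collapses each term to $\bigl(\langle W,e_j\rangle+\sqrt{-1}\langle W,Je_j\rangle\bigr)*\Omega$. The contribution $-\tfrac12 g^{ij}\partial_t G_{ij}$ coming from the denominator supplies exactly the real parts $\sum_k\langle\hn_{e_k}\hH,e_k\rangle+\sum_k\langle\hT(\hH,e_k),e_k\rangle$ with the opposite sign, so these cancel, and taking imaginary parts leaves the clean intermediate identity
\[
\partial_t\theta=\sum_j\langle\hn_{e_j}\hH,Je_j\rangle+\sum_j\langle\hT(\hH,e_j),Je_j\rangle.
\]

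Finally I would massage the two sums into the asserted form. For the first, since $\hn$ is complex and $\hH=J\nabla\theta$ by Proposition \ref{eq_H-theta}, we get $\langle\hn_{e_j}\hH,Je_j\rangle=\langle\hn_{e_j}\nabla\theta,e_j\rangle$; applying the divergence formula \eqref{eq divergence} to the tangential field $\nabla\theta$ turns this into $\Delta\theta+\sum_j\langle\hT(e_j,\nabla\theta),e_j\rangle$, and then $\nabla\theta=-J\hH$ together with the antisymmetry of $\hT$ rewrites the torsion term as $\sum_j\langle\hT(J\hH,e_j),e_j\rangle$. For the second sum, the $J$-invariance of the metric gives $\langle\hT(\hH,e_j),Je_j\rangle=-\langle J\hT(\hH,e_j),e_j\rangle$. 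Combining these produces exactly \eqref{lag_ang}.

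The step I expect to be the main obstacle is the intermediate bookkeeping: one must propagate the torsion correction $\hT(\hH,F_i)$ consistently through both the metric computation of $\partial_t G_{ij}$ and the parallel-form computation of $\partial_t\Omega(F_1,\dots,F_n)$, and check that the real parts genuinely cancel. Equally delicate is the correct use of \eqref{eq divergence}: because $\hn$ is not the induced Levi-Civita connection on $M$, it produces an extra torsion term, and it is precisely this term — after the rewriting $\nabla\theta=-J\hH$ — that assembles into the base-curvature correction $\sum_k\langle\hT(J\hH,e_k),e_k\rangle$ in the statement. One should also verify at the outset that \eqref{hol} may be applied in an arbitrary slot of $\Omega$, which follows from antisymmetry.
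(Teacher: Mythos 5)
Your proposal is correct and follows essentially the same route as the paper: differentiate $*\Omega=\Omega(F_1,\dots,F_n)/\sqrt{\det G_{ij}}$ using that $\Omega$ is $\hn$-parallel, commute $\hn_{\partial_t}F_i=\hn_{F_i}\hH+\hT(\hH,F_i)$, cancel the tangential contributions against $\partial_t\ln\sqrt{\det G}$, extract the $\sqrt{-1}$ via holomorphicity of $\Omega$, and then convert $\sum_k\langle\hn_{e_k}\hH,Je_k\rangle+\langle\hT(\hH,e_k),Je_k\rangle$ into the stated form using $\hn J=0$, $\hH=J\nabla\theta$, the divergence identity \eqref{eq divergence}, and the antisymmetry of $\hT$. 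The intermediate identity and the final massaging match the paper's proof step for step.
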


\begin{proof}
We compute $\frac{\partial}{\partial t}(*\Omega)$ where $*\Omega=\frac{\Omega(F_1, \dots, F_n)}{\sqrt{\det G_{ij}}}$ and $G_{ij}=\langle F_i, F_j\rangle$ is the induced metric on the Lagrangian submanifold $M_t$:
\[\frac{\partial}{\partial t}(*\Omega)=\frac{1}{\sqrt{\det G_{ij}}}\frac{\partial}{\partial t}(\Omega (F_1, \dots, F_n))-*\Omega \frac{\partial}{\partial t}\ln \sqrt{\det G_{ij}}.\]
Since $\Omega$ is parallel with respect to $\widehat{\nabla}$, we derive
\[\frac{\partial}{\partial t} \Omega(F_1, \dots, F_n)=\Omega (\widehat{\nabla}_{\hH}F_1, F_2, \dots, F_n)+\cdots +\Omega (F_1, \dots, F_{n-1}, \widehat{\nabla}_{\hH}  F_n ).\] Decomposing $\widehat{\nabla}_{\hH}F_i=(\widehat{\nabla}_{\hH}F_i)^\perp+(\widehat{\nabla}_{\hH}F_i)^\top$, and noting that $(\widehat{\nabla}_{\hH}F_i)^\top=\langle \widehat{\nabla}_{\hH}F_i, F_j\rangle G^{jk} F_k$, we derive that $\frac{\partial}{\partial t} \Omega(F_1, \dots, F_n)$ is equal to
\[\begin{split}&\langle \widehat{\nabla}_{\hH} {F_i}, F_j\rangle G^{ij} \Omega(F_1, \dots, F_n)+\Omega \left((\widehat{\nabla}_{\hH} {F_1} )^\perp, F_2, \dots, F_n\right)\\
+ &\cdots+
\Omega \left(F_1, F_2, \dots, (\widehat{\nabla}_{\hH} {F_n} )^\perp\right).\end{split}\]

On the other hand,
\[\frac{\partial}{\partial t}\ln \sqrt{\det G_{ij}}=\langle \widehat{\nabla}_{\hH} {F_i} , F_j\rangle (G^{-1})^{ij}.\]
Therefore,
\[\begin{split}\frac{\partial}{\partial t}*\Omega
&=\frac{1}{\sqrt{\det G_{ij}}}\Big[\Omega ([\widehat{\nabla}_{F_1} \hH+\widehat{T}(\hH, F_1)]^\perp, F_2, \dots, F_n)+\cdots\\
&+\Omega ([F_1, F_2, \dots, \widehat{\nabla}_{F_n} \hH+\widehat{T}(\hH, F_n)]^\perp)\Big].\\
\end{split}\]
In the rest of the calculation we can choose coordinates $x^i$ at any point of interest so that $\{F_i=e_i\}_{i=1,\cdots, n}$ is orthonormal. We compute
\[(\widehat{\nabla}_{e_1}\hH)^\perp=\langle \widehat{\nabla}_{e_1} \hH, Je_k\rangle Je_k\] and thus
\[\Omega( \langle \widehat{\nabla}_{e_1} \hH, Je_k\rangle Je_k, e_2, \dots, e_n)=\sqrt{-1} \langle \widehat{\nabla}_{e_1} \hH, Je_1\rangle *\Omega.\]

We can likewise compute other terms and obtain
\[\begin{split}\frac{\partial}{\partial t}(*\Omega)=\sqrt{-1}\sum_{k=1}^n (\langle \widehat{\nabla}_{e_k} \hH, Je_k\rangle +\langle \widehat{T}(\hH, e_k), Je_k\rangle)*\Omega.
\end{split}\] or
\[\begin{split}\frac{\partial}{\partial t}\theta=\sum_{k=1}^n\left(\langle \widehat{\nabla}_{e_k} \hH, Je_k\rangle +\langle \widehat{T}(\hH, e_k), Je_k\rangle\right).
\end{split}\]

Now since $\hn J=0$ and $\hH=J\nabla\theta$ we have
\begin{eqnarray}
\frac{\partial}{\partial t}\theta
&=&\sum_{k=1}^n\left(\langle \widehat{\nabla}_{e_k} \hH, Je_k\rangle
+\langle \widehat{T}(\hH, e_k), Je_k\rangle\right)\nonumber\\
&=&\sum_{k=1}^n\left(\langle \widehat{\nabla}_{e_k} \nabla\theta, e_k\rangle
+\langle \widehat{T}(\hH, e_k), Je_k\rangle\right)\nonumber\\
&\overset{(\ref{eq divergence})}{=}&\Delta\theta
+\sum_{k=1}^n \left(\langle \hT(e_k, \nabla \theta), e_k\rangle
+\langle \widehat{T}(\hH, e_k), Je_k\rangle\right)\nonumber\\
&=&\Delta\theta
+\sum_{k=1}^n \left(\langle \hT(J\hH,e_k), e_k\rangle
-\langle J\widehat{T}(\hH, e_k), e_k\rangle\right)\,.\nonumber
\end{eqnarray}
\end{proof}

\begin{lem}
Along a generalized Lagrangian mean curvature flow, the Liouville form evolves as
\begin{equation}\label{lio}
\frac{\partial}{\partial t} F^*\lambda= d(\lambda(\hH))+\mu.\end{equation}
\end{lem}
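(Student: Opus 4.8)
The plan is to identify $\frac{\partial}{\partial t}F^*\lambda$ with the pullback of a Lie derivative and then reduce everything to the two structural identities already recorded, $\omega=-d\lambda$ from \eqref{dlambda} and $i(\hH)\omega|_M=-\mu$ from \eqref{eq maslov}. Concretely, for the family of immersions $F(\cdot,t)$ with velocity $\frac{\partial F}{\partial t}=\hH$, I would first establish the time-dependent pullback formula
\begin{equation*}
\frac{\partial}{\partial t}(F^*\lambda)=d\big(F^*(i(\hH)\lambda)\big)+F^*\big(i(\hH)\,d\lambda\big),
\end{equation*}
which is just Cartan's formula $\mathcal{L}_{\hH}=d\,i(\hH)+i(\hH)\,d$ transported through the pullback. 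Note that, although $\hH$ is a priori only a vector field along the immersion, each term on the right is well defined pointwise on $M_t$: the first is $d$ of the pulled-back function $i(\hH)\lambda=\lambda(\hH)$, and the second is the pullback of the $1$-form $i(\hH)\,d\lambda$.

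Granting this formula, the two terms are immediate. The pullback of the function $\lambda(\hH)$ is $\lambda(\hH)$ on $M$, so the first term is exactly $d(\lambda(\hH))$. For the second, \eqref{dlambda} gives $d\lambda=-\omega$, whence $F^*(i(\hH)\,d\lambda)=-\,i(\hH)\omega|_M$, and \eqref{eq maslov} identifies $i(\hH)\omega|_M=-\mu$, so this term equals $\mu$. Adding the two pieces gives $\frac{\partial}{\partial t}F^*\lambda=d(\lambda(\hH))+\mu$, as claimed.

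The one step requiring care, and the main obstacle, is the time-dependent pullback formula itself, precisely because $\hH$ is defined only along the moving submanifold. The cleanest justification is a direct coordinate computation: writing $F^*\lambda=\lambda(F_i)\,dx^i$ with $F_i=\frac{\partial F}{\partial x^i}$ and differentiating in $t$, one uses the commutation $\frac{\partial}{\partial t}F_i^A=\frac{\partial}{\partial x^i}\hH^A$ (equality of mixed partials of $F$) to find $\frac{\partial}{\partial t}\lambda(F_i)=\frac{\partial}{\partial x^i}\lambda(\hH)+(d\lambda)(\hH,F_i)$. The first term assembles into $d(\lambda(\hH))$ and the second into $F^*(i(\hH)\,d\lambda)$, reproducing the formula with no reference to any extension of $\hH$ or to the connection $\hn$. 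After that, the proof is the purely algebraic substitution of \eqref{dlambda} and \eqref{eq maslov}, so no further analytic difficulty arises.
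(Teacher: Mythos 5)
Your proposal is correct and follows essentially the same route as the paper, which likewise invokes Cartan's formula $L_X = d\,i(X) + i(X)\,d$ together with the identities $\omega = -d\lambda$ and $i(\hH)\omega|_M = -\mu$ to get $i(\hH)\,d\lambda = \mu$. Your additional coordinate verification of the time-dependent pullback formula is a careful elaboration of a step the paper leaves implicit, but it does not change the argument.
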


\begin{proof}
This follows from the Cartan's formula that the Lie derivative is $L_X=di(X)+i(X)d$.
Note that by equations (\ref{dlambda}) and (\ref{eq maslov}) we have $i(\hH)d\lambda=-i(\hH)\omega=\mu$.
\end{proof}

We recall the statement of Theorem \ref{zero_Maslov} and prove it.
\vskip 10pt
 \noindent{\bf Theorem 2}
 \textit{ Suppose $M_t$, $t\in [0, T)$ is a smooth generalized Lagrangian mean curvature flow in $T^*\Sigma$, if $M_0$ is exact and of vanishing Maslov class, so is $M_t$ for any $t\in [0, T)$.}
\vskip 10pt

\begin{proof}
By differentiating both sides in (\ref{lag_ang}), we see $\mu$ always changes by some exact form. This shows vanishing Maslov class is preserved. Since the Maslov class of $M_t$ vanishes for all
$t$, the Lagrangian angle can be chosen to be a single value function $\theta$ for all $t$.
Now equation (\ref{lio}) can be rewritten as
\[\frac{\partial}{\partial t} F^*\lambda= d(\lambda(\hH) +\theta)\] and we see that being exact is also preserved.
\end{proof}

\section{The graphical case}
In this section we consider the generalized Lagrangian mean curvature flow of
Lagrangian graphs in the cotangent bundle $T^*\Sigma$ of a Riemannian manifold $(\Sigma,\sigma)$
that are induced by $1$-forms
on $\Sigma$. The graphical case is interesting from an analytic point of view and can be seen as a "test case"
for the more general non-graphical situation. Let $M\subset T^*\Sigma$ be the
graph of a smooth $1$-form $\eta\in\Omega^1(\Sigma)$
on $\Sigma$. In this case, we can use local coordinates $q^1,\dots, q^n$ on $\Sigma$ to parametrize $M$ and the graph of $\eta$ defined
by
$$F:\Sigma \to T^*\Sigma\,,\quad F(q)=(q,\eta(q))$$
is Lagrangian if and only if $\eta$ is closed (hence locally exact, i.e. $\eta=du$ for a
locally defined potential $u$ on $\Sigma$). In the sequel we will always assume that $\eta$ is closed.

We remark the the calculation in this section is non-parametric, as opposed to the parametric calculation in the last section.

The tangent space to the image of $F$ is spanned by the basis
$$F_i:=\frac{\partial F}{\partial q^i}=X_i+\eta_{j;i}\frac{\partial}{\partial p_j}\,,$$
where $\eta_{j;i}=\partial_i\eta_j-\Lambda_{ij}^k \eta_k$ denotes the covariant derivative of the one-form $\eta$ with respect to the fixed background metric $\sigma$ on $\Sigma$.

The Lagrangian angle can be computed in terms of $\eta_{j;i}$.

\begin{pro}
Suppose $M$ is a Lagrangian submanifold of $T^*\Sigma$ defined as the graph of a closed $1$-form
$\eta\in\Omega^1(\Sigma)$. Then the Lagrangian angle $\theta$
of $M$ with respect to
to the horizontal distribution is
\[e^{\sqrt{-1} \theta}=\frac{\det(\sigma_{ij}+\sqrt{-1} \eta_{j;i} )}{\sqrt{\det \sigma_{ij}}\sqrt{\det(G_{ij})}}\,,\]
where $\sigma_{ij}$ is the metric on $M$, $\eta_{j;i}$ is the
covariant derivative of $\eta$ with respect to $\sigma_{ij}$, and $G_{ij}=\sigma_{ij}+\sigma^{kl} \eta_{k;i} \eta_{l;j}$ is the induced metric on $M$.

\end{pro}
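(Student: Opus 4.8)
The plan is to reduce the claim to the angle formula already established and then to a single determinant identity. The cleanest route is through Proposition \ref{Omega_angle}, which gives $e^{\sqrt{-1}\theta}=*(\Omega|_M)=\Omega(F_1,\dots,F_n)/\sqrt{\det G_{ij}}$ for the tangent basis $F_i=X_i+\eta_{j;i}\frac{\partial}{\partial p_j}$. Expanding $\Omega$ by its definition \eqref{Omega_0} and evaluating each factor $dq^k-\sqrt{-1}Jdq^k$ on $F_i$ exactly as in the proof of Proposition \ref{Omega_angle} (using \eqref{J}) yields
\[
\Omega(F_1,\dots,F_n)=\sqrt{\det\sigma_{ij}}\;\det\!\left(\langle F_i,X^j\rangle+\sqrt{-1}\,\Big\langle F_i,\tfrac{\partial}{\partial p_j}\Big\rangle\right).
\]
So the whole statement comes down to computing these inner products for the graph basis and then massaging the determinant.

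The second step is the elementary evaluation of the three relevant inner products from the metric relations of Proposition \ref{metric}, writing $X_i=\sigma_{ik}X^k$. Since $\langle X^k,X^j\rangle=\sigma^{kj}$ and $\langle X^k,\frac{\partial}{\partial p_j}\rangle=0$, one gets $\langle F_i,X^j\rangle=\delta_i^j$ and $\langle F_i,\frac{\partial}{\partial p_j}\rangle=\sigma^{jk}\eta_{k;i}$; using in addition $\langle \frac{\partial}{\partial p_k},\frac{\partial}{\partial p_l}\rangle=\sigma^{kl}$ gives $G_{ij}=\langle F_i,F_j\rangle=\sigma_{ij}+\sigma^{kl}\eta_{k;i}\eta_{l;j}$, which confirms the stated form of the induced metric. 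At this point the determinant in the angle formula is $\det\!\big(\delta_i^j+\sqrt{-1}\,\sigma^{jk}\eta_{k;i}\big)$.

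The final step is the determinant factorization. Using the symmetry of $\sigma$ one reads $\delta_i^j+\sqrt{-1}\,\sigma^{jk}\eta_{k;i}=\sigma^{jk}\big(\sigma_{ik}+\sqrt{-1}\,\eta_{k;i}\big)$ as the matrix product of $(\sigma_{ik}+\sqrt{-1}\,\eta_{k;i})$ with the inverse metric $(\sigma^{kj})$, so taking determinants splits it as $(\det\sigma_{ij})^{-1}\det(\sigma_{ij}+\sqrt{-1}\,\eta_{j;i})$. Substituting back and combining the prefactor $\sqrt{\det\sigma_{ij}}$ with $(\det\sigma_{ij})^{-1}$ leaves $1/\sqrt{\det\sigma_{ij}}$, and dividing by $\sqrt{\det G_{ij}}$ produces exactly the asserted expression. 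I expect no genuine obstacle here; the one point requiring care is the index bookkeeping in the factorization—checking that the symmetry of $\sigma$ legitimately turns $\delta_i^j+\sqrt{-1}\,\sigma^{jk}\eta_{k;i}$ into an honest matrix product so the determinant multiplies out, and that the half-integer powers of $\det\sigma_{ij}$ recombine correctly.
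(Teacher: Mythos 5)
Your proof is correct. The paper's own argument is a one-liner: it feeds $F_i=\sigma_{ij}X^j+\eta_{j;i}JX^j$ directly into Lemma \ref{angle} with reference basis $\bar v_j=X^j$, so that $\alpha_{ij}=\sigma_{ij}$, $\beta_{ij}=\eta_{j;i}$, $\det\langle\bar v_i,\bar v_j\rangle=\det\sigma^{ij}=(\det\sigma_{ij})^{-1}$, and \eqref{eq theta} yields the formula with no further manipulation. You instead route the argument through the parallel $(n,0)$-form via Proposition \ref{Omega_angle}, which obliges you to compute $\langle F_i,X^j\rangle=\delta_i^j$ and $\langle F_i,\partial/\partial p_j\rangle=\sigma^{jk}\eta_{k;i}$ and then perform the factorization $\det\bigl(\delta_i^j+\sqrt{-1}\,\sigma^{jk}\eta_{k;i}\bigr)=\det(\sigma_{ij}+\sqrt{-1}\,\eta_{j;i})/\det\sigma_{ij}$. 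The two routes are ultimately the same determinant in different clothing, since Proposition \ref{Omega_angle} is itself proved by evaluating $(dq^k-\sqrt{-1}Jdq^k)(F_i)$, which is exactly the complex matrix of Lemma \ref{angle}; yours is a step longer because of the extra matrix factorization, but it has the merit of exhibiting the graphical angle formula as literally the restriction of $\Omega$ to the graph. All the points you flag for care do check out: the identity $\delta_i^j+\sqrt{-1}\,\sigma^{jk}\eta_{k;i}=\sigma^{jk}(\sigma_{ik}+\sqrt{-1}\,\eta_{k;i})$ is a genuine matrix product (determinants are insensitive to the transposition ambiguity, and $\eta_{j;i}$ is symmetric anyway because $\eta$ is closed), and the half-integer powers of $\det\sigma_{ij}$ recombine as you say.
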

\begin{proof}
This follows from Lemma \ref{angle} with \[v_i=\frac{\partial F}{\partial q^i}= \sigma_{ij} X^j+\eta_{j;i} JX^j.     \]

\end{proof}

The generalized mean curvature flow of graphs can be expressed locally as a fully nonlinear parabolic equation for the locally defined potential function $u$ (with $du=\eta$) on $\Sigma$.

\begin{pro}\label{GMCF-u}
Suppose $M_t$, $t\in [0, T)$ is a generalized mean curvature flow such that each $M_t$ is locally given as the graph of a closed one-form $\eta_t$ with local potential $u(\cdot, t)$ on $\Sigma$. The flow is then up to a tangential diffeomorphism equivalent to
\begin{equation}\label{eq flow}
\frac{\partial u}{\partial t}=\theta=\frac{1}{\sqrt{-1}} \ln   \frac{\det(\sigma_{ij}+\sqrt{-1}  u_{;ij} )}{\sqrt{\det \sigma_{ij}}\sqrt{\det G_{ij}}}
\end{equation}
where $\sigma_{ij}$ is the metric on $\Sigma$, $u_{;ij}=\partial_i\partial_j u-\Lambda_{ij}^k\partial_k u$ is the Hessian of $u$ with respect to $\sigma_{ij}$, and $G_{ij}=\sigma_{ij}+u_{;ik} \,\, \sigma^{kl}  u_{;lj}$ is the induced metric on $M_t$.
\end{pro}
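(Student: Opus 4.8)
The plan is to combine the two facts already in hand: that the generalized mean curvature vector satisfies $\hH=J\nabla\theta$ (Proposition \ref{eq_H-theta}), and that for the graph of $\eta=du$ the Lagrangian angle has the closed form of the preceding proposition. Since $\eta_j=\partial_ju$ gives $\eta_{j;i}=\partial_i\partial_ju-\Lambda_{ij}^k\partial_ku=u_{;ij}$, substitution turns the expression for $e^{\sqrt{-1}\theta}$ directly into the right-hand side of \eqref{eq flow}; thus, once the evolution $\partial_tu=\theta$ is justified, the stated PDE follows. The entire content is therefore the claim that, up to a tangential diffeomorphism, the geometric flow $\partial_tF=\hH$ is reparametrized so as to keep the base point $q$ fixed and move only in the fiber, and that in this gauge the potential evolves exactly by $\theta$.

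To make this precise I would use the standard principle that adding a tangential vector field $V$ to the velocity, $\partial_tF=\hH+V$, leaves the family $M_t$ unchanged up to reparametrization. The goal is to choose $V$ tangential so that $\hH+V$ is purely vertical, i.e.\ has no $X^k$-component; then $F(q,t)=(q,\eta(q,t))$ remains a graph over $\Sigma$ and $\partial_t\eta$ can be read off. First I would decompose $\hH=J\nabla\theta$. Writing $\nabla\theta=G^{ij}(\partial_i\theta)F_j$ and using $F_j=\sigma_{jl}X^l+\eta_{k;j}\frac{\partial}{\partial p_k}$ together with \eqref{J}, one gets
\[
JF_j=\sigma_{jl}\frac{\partial}{\partial p_l}-\eta_{k;j}X^k,
\]
so that $\hH$ has vertical part $G^{ij}(\partial_i\theta)\sigma_{jl}\frac{\partial}{\partial p_l}$ and horizontal part $-G^{ij}(\partial_i\theta)\eta_{k;j}X^k$.

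Next I would cancel the horizontal part with a tangential correction $V=b^mF_m$. Matching $X^k$-components forces $b^m\sigma_{mk}=G^{ij}(\partial_i\theta)\eta_{k;j}$, i.e.\ $b^m=\sigma^{mk}G^{ij}(\partial_i\theta)\eta_{k;j}$, after which $\hH+V$ is vertical with $\frac{\partial}{\partial p_l}$-coefficient equal to $G^{ij}(\partial_i\theta)\sigma_{jl}+b^m\eta_{l;m}$. The key algebraic step—and the one I expect to be the main obstacle—is simplifying $b^m\eta_{l;m}$, where the precise form of the induced metric enters: from $G_{ij}=\sigma_{ij}+\sigma^{kl}\eta_{k;i}\eta_{l;j}$ and the symmetry $\eta_{l;m}=\eta_{m;l}$ (valid since $\eta$ is closed) one has $\sigma^{mk}\eta_{k;j}\eta_{m;l}=G_{jl}-\sigma_{jl}$. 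Hence $b^m\eta_{l;m}=G^{ij}(\partial_i\theta)(G_{jl}-\sigma_{jl})$, and the vertical velocity collapses to $G^{ij}(\partial_i\theta)G_{jl}\frac{\partial}{\partial p_l}=(\partial_l\theta)\frac{\partial}{\partial p_l}$.

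In this gauge $\partial_tF=(\partial_l\theta)\frac{\partial}{\partial p_l}$, so $\partial_t\eta_l=\partial_l\theta$, that is $\partial_t(\partial_lu)=\partial_l\theta$, giving $\partial_tu=\theta$ up to a spatially constant function that can be absorbed; this is exactly \eqref{eq flow}. The delicate point throughout is bookkeeping the raising and lowering with $\sigma$ versus $G$ so that the $G$–$\sigma$ cancellation produces precisely $\partial_l\theta$; once this identity is in place, the remainder is a direct substitution of $\eta_{j;i}=u_{;ij}$ into the Lagrangian angle formula.
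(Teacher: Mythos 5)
Your proof is correct and is essentially the paper's own argument run in the opposite direction: the paper verifies that the normal component of the purely vertical velocity $\partial_t(\partial_l u)\,\tfrac{\partial}{\partial p_l}$ equals $\hH=J\nabla\theta$ by showing $\tfrac{\partial}{\partial p_i}-(G^{-1})^{ij}JF_j$ is tangential, whereas you add a tangential field to $\hH$ to make it vertical; both computations rest on the same decomposition $JF_j=\sigma_{jl}\tfrac{\partial}{\partial p_l}-u_{;kj}X^k$ and the identity $\sigma^{mk}u_{;kj}u_{;ml}=G_{jl}-\sigma_{jl}$. Your final step (integrating $\partial_t(\partial_l u)=\partial_l\theta$ and absorbing the spatial constant) likewise matches the paper's ``obtained by integration.''
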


\begin{proof}
We parametrize the flow by
$$F(q, t)=\left(q, \frac{\partial u}{\partial q^1}(q,t), \dots,\frac{\partial u}{\partial q^n}(q,t)\right)\,,$$
thus  $\frac{\partial F}{\partial t}=\frac{\partial}{\partial t}(\frac{\partial u}{\partial q^i})\frac{\partial }{\partial p_i}$ and the mean curvature vector $\hH$ is computed from (\ref{eq maslov})
\[\frac{\partial \theta}{\partial q^i} (G^{-1})^{ij}(\sigma_{jk} \frac{\partial}{\partial p_k}-u_{;jk}X^k).\] We claim that the normal part $(\frac{\partial F}{\partial t})^\perp$ of $\frac{\partial F}{\partial t}$ is
\[\frac{\partial}{\partial t}(\frac{\partial u}{\partial q^i})(G^{-1})^{ij}(\sigma_{jk} \frac{\partial}{\partial p_k}- u_{;jk}X^k).\] Equating coefficients in $(\frac{\partial F}{\partial t})^\perp=\hH$ yields \[\frac{\partial}{\partial t}(\frac{\partial u}{\partial q^i})=\frac{\partial \theta}{\partial q^i}, i=1\cdots n.\]
The desired equation is obtained by integration. It suffices to show that the normal part of $\frac{\partial}{\partial p_i}$ is
\[(G^{-1})^{ij}(\sigma_{jk} \frac{\partial}{\partial p_k}-u_{;jk}X^k)=(G^{-1})^{ij} J(\frac{\partial F}{\partial q^j})\] which follows from the fact that
\[\frac{\partial}{\partial p_i}-(G^{-1})^{ij} J(\frac{\partial F}{\partial q^j})\] is tangential.
\end{proof}

\begin{rem}
We remark that if $M_t$ remains graphical, there are two ways to parametrize the flow. The first way is the parametric flow in which the velocity vector at each point is  the mean curvature vector and thus represents a normal motion.  We fix a domain manifold and pull back the induced metric as a time-dependent metric defined on the domain. In particular, the equations derived in  \S4 are all with respect to this parametrization. The second way is the so called ``non-parametric flow" in which the velocity vector is a vertical vector, in fact, the vertical component of the mean curvature vector.
In this case, we may take the domain manifold to be the base manifold with the fixed background metric.
In the first case, it is natural to pull back a geometric quantity to the domain and then use the (time-dependent) induced metric to measure it.
In the second case, we project the quantity to the base manifold and use the fixed background metric. All calculations in \S6 and \S7 are with
respect to the non-parametric flow.

\end{rem}

\section{Graphical Lagrangian mean curvature flow in the cotangent bundles of Riemannian manifolds}
\subsection{
The special Lagrangian evolution equation on a Riemannian manifold}

Let $(\Sigma, \sigma)$ be an $n$-dimensional Riemannian manifold with Riemannian metric $\sigma_{ij}$ in a local coordinate system. Given a smooth function $u$ on $\Sigma$, let $u_{;ij} $ be the Hessian of $u$ with respect to the base metric $\sigma_{ij}$. Similarly, $u_{;ijk}$,  $u_{;ijkl}$, etc., denote higher order covariant derivatives of $u$.
From the definition of curvature \eqref{curvature}, we recall the following commutation formulae:
\begin{equation} \label{commutation}
\begin{split} u_{;pqk}-u_{;pkq}&=u_{;l} C^l_{\,\,\,pqk}\\
u_{;kpqi}-u_{;kpiq}&=u_{;lp} C^l_{\,\,\,kqi}+u_{;kl} C^l_{\,\,\,pqi}\\
u_{;mkpqi}-u_{;mkpiq}&=u_{;lkp} C^l_{\,\,\,mqi}
+u_{;mlp} C^l_{\,\,\,kqi}
+u_{;mkl} C^l_{\,\,\,pqi}.
\end{split}\end{equation}

$du$, as a closed one-form, defines a Lagrangian submanifold of the cotangent bundle of $\Sigma$. The Lagrangian angle (with respect to the horizontal distribution) of the graph of $du$ is defined as
\begin{equation}\label{theta}\theta= \frac{1}{\sqrt{-1}}\ln \frac{\det( \sigma_{ij}+\sqrt{-1} u_{;ij} )}{\sqrt{\det \sigma_{ij}}\sqrt{\det(\sigma_{ij}+u_{;ik}  \sigma^{kl} u_{;lj} )}}.\end{equation}

The generalized Lagrangian mean curvature flow defined in the previous section corresponds to the following nonlinear evolution equation of $u$.

\begin{dfn} Let $(\Sigma, \sigma)$ be a Riemannian manifold, a smooth function $u(q, t)$ defined on $\Sigma \times [0, T)$ is said to satisfy  the {\it special Lagrangian evolution equation} if
\begin{equation}\label{eq_u-1}\frac{\partial u}{\partial t}(q, t)=\theta (q, t)= \frac{1}{\sqrt{-1}}\ln \frac{\det( \sigma_{ij}+\sqrt{-1} u_{;ij} )}{\sqrt{\det \sigma_{ij}}\sqrt{\det(\sigma_{ij}+u_{;ik}  \sigma^{kl} u_{;lj} )}}\end{equation} where $u_{;ij}$ is  the Hessian of $u(q, t)$ with respect to the fixed metric $\sigma_{ij}$.
\end{dfn}

Let \begin{equation}\label{eq_G} G_{ij}=\sigma_{ij}+u_{;ik} \sigma^{kl} u_{;lj}\end{equation} be the $(0, 2)$ tensor on $\Sigma$ and $(G^{-1})^{ij}$ be the $(2,0)$ tensor on $\Sigma$ such that $G_{ij} (G^{-1})^{jk}=\delta_i^k$.
The following calculation is on the base manifold $\Sigma$ and indexes of tensors are raised or lowered by the base metric $\sigma$ which is time-independent. All derivatives are covariant derivatives with respect to $\sigma_{ij}$.

\begin{lem} \label{theta_derivative}
The derivative of $\theta$ is given by
\begin{equation}\label{eq_dtheta}\theta_{;k}=(G^{-1})^{ij}u_{;ijk}.\end{equation}
\end{lem}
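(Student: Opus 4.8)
The plan is to differentiate the explicit formula \eqref{theta} for $\theta$ directly. Writing $\theta=\frac{1}{\sqrt{-1}}\ln\det(\sigma_{ij}+\sqrt{-1}u_{;ij})-\frac{1}{2}\ln\det\sigma_{ij}-\frac{1}{2}\ln\det G_{ij}$, I first observe that the middle term is time-independent and involves no derivatives of $u$, so it drops out under $\partial_k$. Thus the computation reduces to handling the first (complex) term and the last ($G$) term. The key algebraic tool throughout is Jacobi's formula: for an invertible matrix-valued function $A$ depending on $q$, one has $\partial_k\ln\det A=(A^{-1})^{ji}\,\partial_k A_{ij}$. I will apply this to $A=\sigma+\sqrt{-1}\,\mathrm{Hess}\,u$ and to $A=G$, remembering that covariant differentiation commutes with the metric $\sigma$ and that $\partial_k$ should really be read as the covariant derivative $\nabla_k$, so $\partial_k$ hitting $u_{;ij}$ produces $u_{;ijk}$.

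For the complex term, Jacobi's formula gives $\frac{1}{\sqrt{-1}}\,(\sigma+\sqrt{-1}\,\mathrm{Hess}\,u)^{-1\,ji}\cdot\sqrt{-1}\,u_{;ijk}=(\sigma+\sqrt{-1}\,\mathrm{Hess}\,u)^{-1\,ji}\,u_{;ijk}$. For the $G$ term, differentiating $G_{ij}=\sigma_{ij}+u_{;ik}\sigma^{kl}u_{;lj}$ yields $G_{ij;k}=u_{;ikm}\sigma^{ml}u_{;lj}+u_{;ik}\sigma^{ml}u_{;lmj}$ (two equal-by-symmetry terms), so $-\tfrac12(G^{-1})^{ji}G_{ij;k}=-(G^{-1})^{ji}u_{;ik}\sigma^{kl}u_{;ljm}\cdot(\text{reindexed})$. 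The heart of the matter is then to combine the real part of the complex-term contribution with the $G$-term contribution and show the total collapses to the stated $(G^{-1})^{ij}u_{;ijk}$. Since $\theta$ is real and the third-order tensor $u_{;ijk}$ is totally symmetric in its first two indices (by construction $u_{;ij}$ is symmetric) and symmetric under swapping the derivative index into the first two slots up to curvature terms from \eqref{commutation}, I expect the imaginary parts to cancel and the real parts to reorganize.

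The main obstacle, and the step I would carry out most carefully, is the purely linear-algebra identity reconciling $\mathrm{Re}\big[(\sigma+\sqrt{-1}\,\mathrm{Hess}\,u)^{-1}\big]$ against $G^{-1}$. The point is that $G=\sigma+\mathrm{Hess}\,u\cdot\sigma^{-1}\cdot\mathrm{Hess}\,u=(\sigma-\sqrt{-1}\,\mathrm{Hess}\,u)\,\sigma^{-1}\,(\sigma+\sqrt{-1}\,\mathrm{Hess}\,u)$ as a product of the complex matrix with its conjugate, factoring through $\sigma^{-1}$. This factorization is exactly what ties the two terms together: it lets me write $(G^{-1})^{ij}=(\sigma+\sqrt{-1}\,\mathrm{Hess}\,u)^{-1}\sigma\,(\sigma-\sqrt{-1}\,\mathrm{Hess}\,u)^{-1}$, and after substituting this and simplifying, the combined derivative should contract cleanly with $u_{;ijk}$ to give $(G^{-1})^{ij}u_{;ijk}$. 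I would verify this identity at a point using normal coordinates where $\sigma_{ij}=\delta_{ij}$ and $\mathrm{Hess}\,u$ is diagonalized, reducing everything to a sum over eigenvalues $\lambda_a$ of $u_{;ij}$ in which each diagonal entry of the complex inverse reads $(1+\sqrt{-1}\lambda_a)^{-1}$ with real part $(1+\lambda_a^2)^{-1}=(G^{-1})_{aa}$; the tensorial statement \eqref{eq_dtheta} then follows by covariance.
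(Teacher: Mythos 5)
Your plan is correct and is essentially the paper's own proof: the authors likewise apply Jacobi's formula to $\ln\det(\sigma_{ij}+\sqrt{-1}u_{;ij})$ and $\ln\det G_{ij}$, and the factorization you identify, $G_{il}=\gamma_{ij}\sigma^{jk}(\sigma_{kl}-\sqrt{-1}u_{;kl})$ with $\gamma_{ij}=\sigma_{ij}+\sqrt{-1}u_{;ij}$, is exactly the identity they use to express $\gamma^{-1}$ through $G^{-1}$ and make the cross terms cancel (only the symmetry of $u_{;ij}$ in $i,j$ is needed; no commutation of the third index arises). The normal-coordinate check you propose is a harmless extra.
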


\begin{proof}
Define $\gamma_{ij}=\sigma_{ij}+\sqrt{-1}{u_{;ij}}$, we compute $\gamma_{ij} \sigma^{jk} (\sigma_{kl}-\sqrt{-1}{u_{;kl}})=G_{il}$. Thus the inverse of $\gamma_{ij}$ is  $\sigma^{jm}(\sigma_{ml}-\sqrt{-1}{u_{;ml}})(G^{-1})^{lp}$. Therefore
\[\begin{split}&\sqrt{-1}\theta_{;k}\\
=&(\gamma_{ij})_{;k} \sigma^{im}(\sigma_{ml}-\sqrt{-1}{u_{;ml}})(G^{-1})^{lj}-\frac{1}{2} G_{ij;k} (G^{-1})^{ij}\\
=&\sqrt{-1} (G^{-1})^{ij} u_{;ijk}. \end{split}\]
\end{proof}

Now suppose $M_t$, $t\in [0, T)$ is a generalized Lagrangian mean curvature flow such that each $M_t$ is given as the graph of a closed one-form $\eta=du$.

Taking the derivative of \eqref{eq_u-1},  in view of Lemma \ref{theta_derivative}, we obtain
\begin{equation}\label{eq_du}\frac{\partial}{\partial t}u_{;k} =(G^{-1})^{ij}u_{;ijk},\end{equation}  which is equivalent to the generalized Lagrangian mean curvature flow by Proposition \ref{GMCF-u}.

We first derive the evolution of  the length square of $du$ with respect to the metric $\sigma$.
\begin{lem} Suppose $u$ is a solution the evolution equation \eqref{eq_u-1} on a Riemannian manifold $(\Sigma, \sigma)$, then $\vartheta=\sigma^{ij}u_{;i}u_{;j}$ satisfies the following evolution equation:
\begin{equation}\label{eq_eta2}
\frac{\partial }{\partial t}\vartheta-(G^{-1})^{ij}\vartheta_{;ij}
=-2 \sigma^{ij}(G^{-1})^{pq}u_{;ip}u_{;jq}+ 2\sigma^{ij}(G^{-1})^{pq}C^l_{pqi}u_{;l}u_{;j}.
\end{equation}
\end{lem}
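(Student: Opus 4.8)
The plan is to differentiate $\vartheta=\sigma^{ij}u_{;i}u_{;j}$ directly and then reorganize the outcome into the heat-operator form $\frac{\partial}{\partial t}\vartheta-(G^{-1})^{ij}\vartheta_{;ij}$ plus lower order terms, exploiting throughout that $\sigma$ is both time-independent and parallel, so that it passes freely through $\frac{\partial}{\partial t}$ and through covariant differentiation. The reaction terms on the right-hand side should then emerge: a negative-definite quadratic term from the interaction of the second derivatives of $u$, and a curvature term from commuting covariant derivatives.

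First I would compute the time derivative. Using the evolution equation \eqref{eq_du}, namely $\frac{\partial}{\partial t}u_{;k}=(G^{-1})^{pq}u_{;pqk}$, together with the product rule, I get $\frac{\partial}{\partial t}\vartheta=2\sigma^{ij}u_{;j}(G^{-1})^{pq}u_{;pqi}$. Next I would compute the spatial piece. Since $\sigma^{ij}$ is covariantly constant, $\vartheta_{;k}=2\sigma^{ij}u_{;ik}u_{;j}$ and $\vartheta_{;kl}=2\sigma^{ij}(u_{;ikl}u_{;j}+u_{;ik}u_{;jl})$, so that, after relabelling the contracted indices and using that $(G^{-1})^{pq}$ is symmetric (which holds because $G_{ij}$ is), $(G^{-1})^{kl}\vartheta_{;kl}=2\sigma^{ij}u_{;j}(G^{-1})^{pq}u_{;ipq}+2\sigma^{ij}(G^{-1})^{pq}u_{;ip}u_{;jq}$. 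Subtracting, the second summand here produces exactly the term $-2\sigma^{ij}(G^{-1})^{pq}u_{;ip}u_{;jq}$ on the right-hand side of \eqref{eq_eta2}.

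It remains to identify the curvature term from the leftover third-derivative pieces, which combine into $2\sigma^{ij}u_{;j}(G^{-1})^{pq}(u_{;pqi}-u_{;ipq})$. Here is the one genuinely delicate step. I would first use the symmetry of the Hessian, $u_{;ip}=u_{;pi}$, differentiated once in the direction $q$, to write $u_{;ipq}=u_{;piq}$; this reduces the difference to a swap of the last two indices of a third covariant derivative. Applying the first commutation formula in \eqref{commutation} with $k$ replaced by $i$ gives $u_{;pqi}-u_{;piq}=u_{;l}C^l_{pqi}$, whence $2\sigma^{ij}u_{;j}(G^{-1})^{pq}(u_{;pqi}-u_{;ipq})=2\sigma^{ij}(G^{-1})^{pq}C^l_{pqi}u_{;l}u_{;j}$, which is the second term of \eqref{eq_eta2}. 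The main obstacle is purely bookkeeping: one must append the differentiation indices in the correct order, keep straight which pair is being interchanged, and invoke the Hessian symmetry \emph{before} commuting, so that a single application of \eqref{commutation} yields exactly one curvature term with the stated index placement.
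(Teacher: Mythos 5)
Your proposal is correct and follows essentially the same route as the paper: compute $\frac{\partial}{\partial t}\vartheta$ via \eqref{eq_du}, compute $(G^{-1})^{kl}\vartheta_{;kl}$, and convert the leftover third-derivative difference into the curvature term using the first commutation formula in \eqref{commutation}. The paper's proof is just a terser version of this; your extra care with the Hessian symmetry before commuting is exactly the bookkeeping the paper leaves implicit.
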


\begin{proof}
A straightforward calculation using \eqref{eq_du} yields $$\frac{\partial}{\partial t} \vartheta=2\sigma^{ij}(G^{-1})^{pq}u_{;pqi}u_{;j}$$ and
$$(G^{-1})^{ij}\vartheta_{;ij}=2\sigma^{ij}(G^{-1})^{pq}(u_{;ip}u_{;jq}+u_{;ipq}u_{;j}).$$
The desired equation follows from \eqref{commutation}.

\end{proof}

In the following calculation, we often use a normal coordinate system near a point to diagonalize the Hessian of $u$. Thus at this point,
we can assume that for each $i, j$,
\begin{equation}\label{diagonal}
\sigma_{ij}=\delta_{ij}, u_{;ij}=\lambda_i \delta_{ij},G_{ij}=(1+\lambda_i^2) \delta_{ij},(G^{-1})^{ij}=\frac{\delta_{ij}}{(1+\lambda_i^2)}
\end{equation} where $\lambda_i, i=1\cdots n$ are the eigenvalues of $u_{;ij}$.

In the case when  the sectional curvatures $\sigma_\Sigma$ has a lower bound $c$, we have the following proposition.
\begin{pro}\label{eta-ineq} Suppose $u$ is a solution of the evolution equation \eqref{eq_u-1} on a Riemannian manifold $(\Sigma, \sigma)$.
If the sectional curvatures $\sigma_\Sigma$ of $(\Sigma,\sigma)$ satisfy $\sigma_\Sigma \ge c$
for
$c\in\real{}$, then at a point where \eqref{diagonal} holds true, we have
$$\frac{\partial}{\partial t}\vartheta- (G^{-1})^{ij} \vartheta_{;ij} \le-2\sum_{i=1}^n\frac{\lambda_i^2}{1+\lambda_i^2}
-2c \sum_{p=1}^n \frac{1}{1+\lambda_p^2} (\sum_{i\not=p} u_{;i}^2)
\,.$$
In particular, if $\Sigma$ is compact  and $c \geq 0$, then for $t\in [0, T)$,
$$\vartheta\le \max_{t=0}\vartheta$$
\end{pro}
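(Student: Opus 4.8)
The plan is to first establish the pointwise differential inequality and then deduce the global maximum-principle conclusion. Starting from the exact evolution equation \eqref{eq_eta2} for $\vartheta=\sigma^{ij}u_{;i}u_{;j}$, I would evaluate each term at a point where the normal-coordinate diagonalization \eqref{diagonal} holds. The first term on the right of \eqref{eq_eta2}, namely $-2\sigma^{ij}(G^{-1})^{pq}u_{;ip}u_{;jq}$, becomes $-2\sum_{i}\frac{\lambda_i^2}{1+\lambda_i^2}$ after substituting $u_{;ij}=\lambda_i\delta_{ij}$ and $(G^{-1})^{ij}=\frac{\delta_{ij}}{1+\lambda_i^2}$, which is precisely the first term of the claimed bound (and here the inequality is in fact an equality). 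So the real work is with the curvature term $2\sigma^{ij}(G^{-1})^{pq}C^l_{pqi}u_{;l}u_{;j}$.

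For the curvature term I would diagonalize to write it as $2\sum_{p}\frac{1}{1+\lambda_p^2}\sum_{i}C^l_{\,\,pi p}\,u_{;l}u_{;i}$ (using $\sigma^{ij}=\delta_{ij}$ and collapsing the $q$-sum against $(G^{-1})^{pq}$). The key point is to recognize the combination $C^l_{\,\,pip}u_{;l}u_{;i}$ as a sum of sectional-curvature expressions evaluated on the pairs of coordinate directions $e_i,e_p$ weighted by the components $u_{;i},u_{;p}$ of $\nabla u$. Concretely, for fixed $p$ the quantity $\sum_{i}C^l_{\,\,pip}u_{;l}u_{;i}$ is, up to sign conventions fixed by \eqref{curvature}, the sectional-curvature quadratic form applied to $\nabla u$ restricted to the orthogonal complement of $e_p$; the diagonal $i=p$ term drops because the sectional curvature of a vector with itself vanishes. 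Using $\sigma_\Sigma\ge c$ then gives the pointwise lower bound $\sum_{i}C^l_{\,\,pip}u_{;l}u_{;i}\ge c\sum_{i\neq p}u_{;i}^2$ on each curvature block, which when inserted yields exactly the second term $-2c\sum_{p}\frac{1}{1+\lambda_p^2}\bigl(\sum_{i\neq p}u_{;i}^2\bigr)$ of the claimed inequality. I expect this curvature bookkeeping—correctly matching the index contractions in $C^l_{\,\,pqi}u_{;l}u_{;j}$ to the sectional-curvature form and getting the sign right from the convention \eqref{curvature}—to be the main obstacle.

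For the final ``in particular'' assertion, I would argue by the maximum principle. When $c\ge 0$, both terms on the right-hand side of the pointwise inequality are nonpositive (the operator coefficient $(G^{-1})^{ij}$ is positive definite, and $\sum_{i\neq p}u_{;i}^2\ge 0$), so $\vartheta$ is a subsolution of the parabolic operator $\frac{\partial}{\partial t}-(G^{-1})^{ij}\partial_i\partial_j$:
\[
\frac{\partial}{\partial t}\vartheta-(G^{-1})^{ij}\vartheta_{;ij}\le 0.
\]
Since $\Sigma$ is compact, the parabolic maximum principle applied to this linear-in-$\vartheta$ (quasilinear) inequality shows that $\max_{\Sigma}\vartheta(\cdot,t)$ is nonincreasing in $t$, whence $\vartheta\le\max_{t=0}\vartheta$ for all $t\in[0,T)$. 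The only care needed here is to note that the inequality holds in the full tensorial form (not merely at the diagonalizing point), which follows because both sides are coordinate-invariant and the diagonalization is just a convenient computation of an invariant bound.
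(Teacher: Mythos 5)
Your plan matches the paper's proof essentially verbatim: evaluate \eqref{eq_eta2} at a point where \eqref{diagonal} holds, note the first term is exactly $-2\sum_i\lambda_i^2/(1+\lambda_i^2)$, recognize the curvature contraction as the sectional-curvature quadratic form applied to the component of $\nabla u$ orthogonal to $e_p$ (the paper phrases this by further choosing coordinates so that $C_{lpip}$ is diagonal in $l,i$, with the $i=p$ entry vanishing), and conclude with the parabolic maximum principle. The one bookkeeping slip to fix is that collapsing $q$ against $(G^{-1})^{pq}$ yields $C^l_{ppi}=-C^l_{pip}$, so the curvature term equals $-2\sum_p(1+\lambda_p^2)^{-1}\sum_{l,i}C_{lpip}u_{;l}u_{;i}$; it is precisely this minus sign that converts the lower bound $\sum_{l,i}C_{lpip}u_{;l}u_{;i}\ge c\sum_{i\ne p}u_{;i}^2$ into the stated upper bound.
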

\begin{proof}
We simply the right hand side of \eqref{eq_eta2}  at a point where \eqref{diagonal} holds,
\[2\sigma^{ij}(G^{-1})^{pq}C^l_{pqi}u_{;l}u_{;j}=-2\sum_{p=1}^n \frac{1}{1+\lambda_p^2}(\sum_{l, i} C_{lpip} u_{;l} u_{;i}).\]

We may assume that the coordinate at this point is chosen so that $C_{lpip}=0$ if $l\not =i$. Note that $C_{ipip}, p\not= i$ is the sectional curvature spanned by the $i$ and $p$ directions, and thus by assumption,
\begin{equation}
\begin{split} & 2\sigma^{ij}(G^{-1})^{pq}C^l_{pqi}u_{;l}u_{;j} \leq -2c \sum_{p=1}^n \frac{1}{1+\lambda_p^2} (\sum_{i\not=p} u_{;i}^2)
\end{split}
\end{equation}
and
$$\sigma^{ij}(G^{-1})^{pq}u_{;ip}u_{;jq}=\sum_{i=1}^n\frac{\lambda_i^2}{1+\lambda_i^2}.$$

The last statement follows from the maximum principle.
\end{proof}
Consider the evolution equation of $\rho$, where
\begin{equation}\label{eq_rho} \rho=\frac{1}{2}\ln \det G_{ij}-\frac{1}{2}\ln\det {\sigma_{ij}}=\frac{1}{2}\ln \det (\sigma_{ij}+u_{;ik} \sigma^{kl} u_{;lj})-\frac{1}{2}\ln\det {\sigma_{ij}}.\end{equation}

\begin{lem} Suppose $u$ is a solution of the evolution equation \eqref{eq_u-1} on a Riemannian manifold $(\Sigma, \sigma)$, then $\rho$ defined in equation \eqref{eq_rho} satisfies the following evolution equation:
\begin{equation}\label{eq_rho2}
\begin{split}
&\frac{\partial \rho}{\partial t}-(G^{-1})^{kl}{\rho}_{;kl}\\
&=(G^{-1})^{ij} (G^{-1})^{pq} u^{\,k}_{;\,\,j} (\Xi_1)_{pqik} \\
&-(G^{-1})^{kl}(G^{-1})^{pq}u_{;prk} \sigma^{rs}u_{;sql}-\frac{1}{2} (G^{-1})^{kl} G_{pq;k} (G^{-1})^{pq}_{;\,\,\,l}+(G^{-1})^{pq}_{;\,\,\,k} u_{;pqi}u^{\,k}_{;\,\,j} (G^{-1})^{ij},\\
\end{split}
\end{equation} where \[ (\Xi_1)_{pqik}=u_{;lk} C^l_{\,\,\,pqi}+u_{;l}C^l_{\,\,\,pqi;k}+u_{;lp} C^l_{\,\,\,iqk}+u_{;il} C^l_{\,\,\,pqk}+u_{;lq} C^l_{\,\,\,ipk}+u_{;l} C^l_{\,\,\, ipk;q}\]
\end{lem}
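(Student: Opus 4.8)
The plan is to differentiate $\rho$ directly in $t$, to compare the result with its $(G^{-1})$-Laplacian, and to reduce the difference to a single fourth-order commutator of covariant derivatives of $u$, which is then resolved by the Ricci identities \eqref{commutation}. Throughout I use that $\tfrac12\ln\det\sigma_{ij}$ is both time-independent and parallel with respect to $\sigma$, so that every time derivative and every covariant derivative of $\rho$ agrees with the corresponding derivative of $\tfrac12\ln\det G_{ij}$.

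First I would record the elementary identities
\[\frac{\partial\rho}{\partial t}=\tfrac12(G^{-1})^{ij}\frac{\partial G_{ij}}{\partial t},\qquad \rho_{;k}=\tfrac12(G^{-1})^{ij}G_{ij;k},\]
\[\rho_{;kl}=\tfrac12(G^{-1})^{ij}_{;l}G_{ij;k}+\tfrac12(G^{-1})^{ij}G_{ij;kl},\qquad (G^{-1})^{ij}_{;l}=-(G^{-1})^{ia}(G^{-1})^{jb}G_{ab;l}.\]
Since the background connection does not depend on $t$ and $\partial_t u=\theta$ by \eqref{eq_u-1}, one has $\partial_t u_{;ij}=\theta_{;ij}$, hence from \eqref{eq_G} that $\frac{\partial G_{ij}}{\partial t}=\theta_{;ia}\sigma^{ab}u_{;bj}+u_{;ia}\sigma^{ab}\theta_{;bj}$. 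Contracting with $\tfrac12(G^{-1})^{ij}$ and collapsing the two summands by the symmetry of $(G^{-1})^{ij}$ gives $\partial_t\rho=(G^{-1})^{ij}u^{\,a}_{;\,\,j}\,\theta_{;ia}$. Next I substitute $\theta_{;ia}=(\theta_{;i})_{;a}$ with $\theta_{;i}=(G^{-1})^{pq}u_{;pqi}$ from Lemma \ref{theta_derivative}, so that $\theta_{;ia}=(G^{-1})^{pq}_{;a}u_{;pqi}+(G^{-1})^{pq}u_{;pqia}$; the piece carrying $(G^{-1})^{pq}_{;a}$ is, after relabelling $a\to k$, exactly the last term of the asserted right-hand side.

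On the other side I expand $(G^{-1})^{kl}\rho_{;kl}$ using $G_{ij;k}=u_{;iak}u^{\,a}_{;\,\,j}+u^{\,a}_{;\,\,i}u_{;jak}$ and its covariant derivative; after symmetrizing in $i,j$, three pieces remain. The term $\tfrac12(G^{-1})^{kl}(G^{-1})^{ij}_{;l}G_{ij;k}$ becomes, upon subtraction and the inverse-derivative identity above, the term $-\tfrac12(G^{-1})^{kl}G_{pq;k}(G^{-1})^{pq}_{;l}$; the cross term $(G^{-1})^{kl}(G^{-1})^{ij}u_{;iak}u^{\,a}_{;\,\,jl}$ becomes $-(G^{-1})^{kl}(G^{-1})^{pq}u_{;prk}\sigma^{rs}u_{;sql}$; and the genuinely fourth-order term is $(G^{-1})^{kl}(G^{-1})^{ij}u_{;iakl}u^{\,a}_{;\,\,j}$. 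Subtracting from $\partial_t\rho$, all the lower-order terms match the asserted right-hand side, and the only surviving fourth-order contribution is
\[(G^{-1})^{ij}(G^{-1})^{pq}u^{\,k}_{;\,\,j}\bigl(u_{;pqik}-u_{;ikpq}\bigr),\]
after identifying the contracted index $a$ with $k$.

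It therefore remains to prove the purely tensorial identity $u_{;pqik}-u_{;ikpq}=(\Xi_1)_{pqik}$. I would reorder the index string $pqik$ into $ikpq$ by four adjacent transpositions: commute positions $2,3$, then freely swap the symmetric Hessian positions $1,2$, then commute positions $3,4$, then commute positions $2,3$ once more. Commuting a derivative index past the Hessian block (the first and last transpositions) is governed by the covariant derivative of the first line of \eqref{commutation} and produces $u_{;lk}C^l_{pqi}+u_{;l}C^l_{pqi;k}$ and $u_{;lq}C^l_{ipk}+u_{;l}C^l_{ipk;q}$, respectively; commuting the two outer derivative indices (the third transposition) is governed directly by the second line of \eqref{commutation} and produces $u_{;lp}C^l_{iqk}+u_{;il}C^l_{pqk}$; the free transposition contributes nothing. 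Summing these six terms gives precisely $(\Xi_1)_{pqik}$. I expect this last step to be the main obstacle, not because any single commutation is hard, but because one must commit to one reordering path and account for every curvature term, in particular the once-differentiated curvature $C^l_{\,\cdots\,;\,\cdot}$ that appears whenever a derivative is commuted down to the level of the gradient $u_{;l}$, so that exactly these six terms, and no spurious ones, remain.
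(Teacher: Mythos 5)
Your proposal is correct and follows essentially the same route as the paper: both compute $\partial_t\rho$ through $\theta_{;ik}=(G^{-1})^{pq}_{;k}u_{;pqi}+(G^{-1})^{pq}u_{;pqik}$, expand $(G^{-1})^{kl}\rho_{;kl}$ via $G_{pq;k}$, reduce the difference to the contracted commutator $(G^{-1})^{ij}(G^{-1})^{pq}u^{\,k}_{;\,\,j}(u_{;pqik}-u_{;ikpq})$, and resolve it by the same telescoping chain of Ricci identities $pqik\to piqk\to ipqk\to ipkq\to ikpq$ that the paper writes as $u_{;pqik}=(u_{;pqi}-u_{;piq})_{;k}+(u_{;ipqk}-u_{;ipkq})+(u_{;ipk}-u_{;ikp})_{;q}+u_{;ikpq}$.
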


\begin{proof} We first verify the following two identities:
\begin{equation}\begin{split}\label{eq_rho3}\frac{\partial \rho}{\partial t}&=\left[(G^{-1})^{pq} u_{;pqik}+(G^{-1})^{pq}_{;k} u_{;pqi}\right] u^{\,k}_{;\,\,j} (G^{-1})^{ij}\\
\rho_{;kl}&=u_{;prkl} u^{\,r}_{;\,\,q} (G^{-1})^{pq}+ u_{;prk} u^{\,r}_{;\,\,ql} (G^{-1})^{pq} +\frac{1}{2} G_{pq;k} (G^{-1})^{pq}_{;l}.\end{split}\end{equation}

By the definition of $\rho$ and $G_{ij}$, after symmetrization we get
\[\frac{\partial \rho}{\partial t}=(\frac{\partial u_{;ik}}{\partial t})   u^{\,k}_{;\,\,j} (G^{-1})^{ij}.\]

Differentiating \eqref{eq_du}, we obtain \[\frac{\partial u_{;ij}}{\partial t}=\theta_{;ij}.\] Recall from \eqref{eq_dtheta} $\theta_{;i}=(G^{-1})^{pq} u_{;pqi}$ and differentiate this equation one more time, we derive
\[\theta_{;ik}=(G^{-1})^{pq}_{;k} u_{;pqi}+(G^{-1})^{pq} u_{;pqik}.\] This gives the first formula in  \eqref{eq_rho3}. On the other hand,
\[\rho_{;k}=\frac{1}{2} (G_{pq})_{;k} (G^{-1})^{pq}\text{ and }G_{pq;k}=u_{;prk} u^{\,r}_{;\,\,q}+
u^{\,r}_{;\,\,p}u_{;rqk}.\] Differentiating one more time, we obtain the second formula in \eqref{eq_rho3}. To this end, it suffices to compute
\[(G^{-1})^{pq}u_{;pqik} u^{\,k}_{;\,\,j} (G^{-1})^{ij}-(G^{-1})^{kl} u_{;prkl} u^{\, r}_{;\,\,q} (G^{-1})^{pq}=(G^{-1})^{ij} (G^{-1})^{pq}(u_{;pqik}-u_{;ikpq})u^{\,k}_{;\,\,j}.\]
We write \[u_{;pqik}=(u_{;pqi}-u_{;piq})_{;k}+(u_{;ipqk}-u_{;ipkq})+(u_{;ipk}-u_{;ikp})_{;q}+u_{;ikpq}.\] Therefore, by the commutation formula for curvature tensor in equation \eqref{commutation}, we obtain
\[\begin{split}u_{;pqik}&=u_{;lk} C^l_{\,\,pqi}+u_{;l}C^l_{\,\,pqi;k}+u_{; lp}C^l_{\,\,iqk}+u_{;il} C^l_{\,\,pqk} +u_{;lq}C^l_{\,\,ipk}+u_{;l} C^l_{\,\,ipk;q}+u_{;ikpq}\\
&=(\Xi_1)_{pqik}+u_{;ikpq}.\end{split}\]
\end{proof}
We simplify the right hand side of equation \eqref{eq_rho2} at a point  using \eqref{diagonal}.
\begin{pro} Suppose $u$ is a solution of the evolution equation \eqref{eq_u-1} on a Riemannian manifold $(\Sigma, \sigma)$, then $\rho$ defined in equation \eqref{eq_rho} satisfies the following equation at a point where \eqref{diagonal} holds true:
\begin{equation}\label{eq_rho_sphere}
\begin{split}
&\frac{\partial \rho}{\partial t}-(G^{-1})^{kl}{\rho}_{;kl}\\
&= \sum_{p, q, k} \frac{-1+\lambda_p\lambda_q-\lambda_k(\lambda_p+\lambda_q)}{(1+\lambda_p^2)(1+\lambda_q^2)(1+\lambda_k^2)} u_{;pqk}^2- \sum_{p<k}\frac{2(\lambda_p-\lambda_k)^2}{(1+\lambda_p^2)(1+\lambda_k^2)} C^p_{\,\,\, kpk}\\
&+\sum_{p,k, l}\frac{(\lambda_k-\lambda_p)}{(1+\lambda_k^2)(1+\lambda_p^2)}u_{;l} C^l_{\,\,\, kpk;p}.
\end{split} \end{equation}
\end{pro}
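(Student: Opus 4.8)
The plan is to substitute the diagonal data \eqref{diagonal} directly into the tensorial identity \eqref{eq_rho2} and to sort the outcome into three groups matching the three sums of \eqref{eq_rho_sphere}: one built purely from the third covariant derivatives $u_{;pqk}$, one pairing the curvature $C$ with the Hessian, and one pairing the covariant derivative $C_{;\,}$ with $\nabla u$. First I would record the building blocks at the distinguished point. Differentiating \eqref{eq_G} gives $G_{pq;k}=(\lambda_p+\lambda_q)u_{;pqk}$, and then differentiating $G_{ij}(G^{-1})^{jk}=\delta_i^k$ yields $(G^{-1})^{pq}_{;k}=-\dfrac{(\lambda_p+\lambda_q)u_{;pqk}}{(1+\lambda_p^2)(1+\lambda_q^2)}$; also $u^{\,k}_{;\,\,j}=\lambda_j\delta^{kj}$ at this point.

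The three terms of \eqref{eq_rho2} that contain no $C$ — namely $-(G^{-1})^{kl}(G^{-1})^{pq}u_{;prk}\sigma^{rs}u_{;sql}$, $-\tfrac12(G^{-1})^{kl}G_{pq;k}(G^{-1})^{pq}_{;l}$, and $(G^{-1})^{pq}_{;k}u_{;pqi}u^{\,k}_{;\,\,j}(G^{-1})^{ij}$ — become, after inserting \eqref{diagonal} and the building blocks above, sums of $u_{;pqk}^2$ with explicit rational coefficients in the $\lambda$'s. Since $u_{;pqk}^2$ is symmetric in $p,q$, I would symmetrize each coefficient in $p\leftrightarrow q$; adding the three contributions, the numerators collapse to $-1+\lambda_p\lambda_q-\lambda_k(\lambda_p+\lambda_q)$ over the common denominator $(1+\lambda_p^2)(1+\lambda_q^2)(1+\lambda_k^2)$, which is exactly the first sum in \eqref{eq_rho_sphere}.

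For the term $(G^{-1})^{ij}(G^{-1})^{pq}u^{\,k}_{;\,\,j}(\Xi_1)_{pqik}$ I would note that at the diagonal point the contractions force $q=p$ and $i=k$, so only $(\Xi_1)_{ppkk}$ enters, weighted by $\dfrac{\lambda_k}{(1+\lambda_k^2)(1+\lambda_p^2)}$. Splitting $\Xi_1$ into its four Hessian$\times$curvature terms and its two $\nabla u\times C_{;\,}$ terms, and using $u_{;lk}=\lambda_k\delta_{lk}$ etc., the first and fourth Hessian terms coincide and each produce $-\dfrac{\lambda_k^2}{(1+\lambda_k^2)(1+\lambda_p^2)}C^p_{\,\,\,kpk}$ (after rewriting $C^k_{\,\,\,ppk}=-C^p_{\,\,\,kpk}$ via the antisymmetry of $C$ in its last pair together with pair symmetry), while the third and fifth coincide and each produce $+\dfrac{\lambda_k\lambda_p}{(1+\lambda_k^2)(1+\lambda_p^2)}C^p_{\,\,\,kpk}$. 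Their total is $2\sum_{p,k}\dfrac{\lambda_k(\lambda_p-\lambda_k)}{(1+\lambda_k^2)(1+\lambda_p^2)}C^p_{\,\,\,kpk}$; symmetrizing in $p\leftrightarrow k$ — legitimate since $C^p_{\,\,\,kpk}$ is invariant under this swap — turns $\lambda_k(\lambda_p-\lambda_k)$ into $-\tfrac12(\lambda_p-\lambda_k)^2$ and, discarding the vanishing diagonal, gives precisely $-\sum_{p<k}\dfrac{2(\lambda_p-\lambda_k)^2}{(1+\lambda_p^2)(1+\lambda_k^2)}C^p_{\,\,\,kpk}$.

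Finally the two $\nabla u\times C_{;\,}$ terms of $\Xi_1$ give $\sum_{p,k,l}\dfrac{\lambda_k}{(1+\lambda_k^2)(1+\lambda_p^2)}u_{;l}\big(C^l_{\,\,\,ppk;k}+C^l_{\,\,\,kpk;p}\big)$. The second summand already matches the target. For the first I would relabel $p\leftrightarrow k$ and use the antisymmetry $C^l_{\,\,\,kkp}=-C^l_{\,\,\,kpk}$ to rewrite it as $-\sum_{p,k,l}\dfrac{\lambda_p}{(1+\lambda_p^2)(1+\lambda_k^2)}u_{;l}C^l_{\,\,\,kpk;p}$; adding the two produces the coefficient $\lambda_k-\lambda_p$ and hence the last sum of \eqref{eq_rho_sphere}. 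I expect the \emph{main obstacle} to be the bookkeeping in the curvature group: correctly pairing the four Hessian$\times C$ contributions and reducing the various curvature components to the single sectional form $C^p_{\,\,\,kpk}$ through the antisymmetry and pair symmetry of $C$, together with the sign accounting in the $p\leftrightarrow k$ symmetrization. The remaining reductions are routine once the building blocks are in hand.
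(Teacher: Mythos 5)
Your proposal is correct and follows essentially the same route as the paper: substitute the diagonal data \eqref{diagonal} into \eqref{eq_rho2}, symmetrize the three pure third-derivative terms in $p\leftrightarrow q$ to get the numerator $-1+\lambda_p\lambda_q-\lambda_k(\lambda_p+\lambda_q)$, reduce the $\Xi_1$ contraction to $(\Xi_1)_{ppkk}$ weighted by $\lambda_k/((1+\lambda_k^2)(1+\lambda_p^2))$, and symmetrize in $p\leftrightarrow k$ using the curvature symmetries to produce the $-(\lambda_p-\lambda_k)^2$ coefficient and the $(\lambda_k-\lambda_p)\,u_{;l}C^l_{\,\,\,kpk;p}$ term. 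Your bookkeeping of the curvature identities ($C^k_{\,\,\,ppk}=-C^p_{\,\,\,kpk}$, relabeling in the $\nabla C$ terms) matches what the paper does implicitly.
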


\begin{proof}
We compute those  terms that involve the third covariant derivatives $u_{;pqk}$ first. For any fixed indexes $p, q, k$, we derive
\[(G^{-1})^{pq}_{;k}=-(G^{-1})^{pr}(u_{;rmk} u^{\,m}_{;\,\,s}+u^{\,m}_{;\,\,r} u_{;msk})(G^{-1})^{sq}=-\frac{\lambda_p+\lambda_q}{(1+\lambda_p^2)(1+\lambda_q^2)} u_{;pqk}\] and
\[G_{pq;k}=(\lambda_p+\lambda_q) u_{;pqk}.\]

Therefore,
\[\sum_{p, q, k, l}-\frac{1}{2}(G^{-1})^{kl} G_{pq;k} (G^{-1})^{pq}_{;l}=\sum_{p, q, k}\frac{1}{2} \frac{(\lambda_p+\lambda_q)^2}{(1+\lambda_p^2)(1+\lambda_q^2)(1+\lambda_k^2)} u_{;pqk}^2 \] and
\[\sum_{p, q,i, j, k}(G^{-1})^{pq}_{;k} u_{;pqi} u^{\,k}_{;\,\,j} (G^{-1})^{ij}=-\sum_{p, q, k}\frac{(\lambda_p+\lambda_q)\lambda_k}{(1+\lambda_p^2)(1+\lambda_q^2)(1+\lambda_k^2)} u_{;pqk}^2.\]

On the other hand,
\[-\sum_{p, q, k, l, r, s} (G^{-1})^{kl} (G^{-1})^{pq} u_{;prk} \sigma^{rs} u_{;sql}=\sum_{p, q, k}\frac{-1}{(1+\lambda_p^2)(1+\lambda_k^2)} u_{;pqk}^2.\] Adding up the last three terms and symmetrizing the indexes $p$ and $q$, we obtain
\[\sum_{p, q, k}\frac{-1+\lambda_p\lambda_q-\lambda_k(\lambda_p+\lambda_q)}{(1+\lambda_p^2)(1+\lambda_q^2)(1+\lambda_k^2)} u_{;pqk}^2.\] When the base manifold is flat the indexes $q$ and $k$ are symmetric and this term is
\[-\sum_{p, q, k}\frac{(1+\lambda_p\lambda_q)}{(1+\lambda_p^2)(1+\lambda_q^2)(1+\lambda_k^2)} u_{;pqk}^2.\] This recovers the equation in \cite{sw1}.

Now we turn to ambient curvature term, first of all we observe that for fixed indexes $i, k, p, q$,
\[\sum_j (G^{-1})^{ij} (G^{-1})^{pq}u^{\,k}_{;\,\,j}=\frac{\lambda_k}{(1+\lambda_k^2){(1+\lambda_p^2)}}\delta_{pq}\delta_{ik}.\] Therefore the ambient curvature term becomes
\[\frac{\lambda_k}{(1+\lambda_k^2)(1+\lambda_p^2)}(2\lambda_k C^k_{\,\,\,ppk}+2\lambda_p C^p_{\,\,\,kpk}+\sum_l u_{;l } C^l_{\,\,\,ppk;k}+u_{;l}C^l_{\,\,\, kpk;p}).\] Symmetrizing $k$ and $p$ using the symmetry of the curvature operator, we obtain
\[\sum_{p, k} \frac{-(\lambda_p-\lambda_k)^2}{(1+\lambda_p^2)(1+\lambda_k^2)} C^p_{\,\,\, kpk}+\sum_{p, k, l} \frac{(\lambda_k-\lambda_p)}{(1+\lambda_k^2)(1+\lambda_p^2)}u_{;l} C^l_{\,\,\, kpk;p}.\] The first term is non-positive if the sectional curvature of $g$ is.

\end{proof}

Note that, in view of the definition of $\rho$ \eqref{eq_rho}, at a point where \eqref{diagonal} holds, \[\rho=\frac{1}{2} \ln \left[\prod_{i=1}^n (1+\lambda_i^2)\right].\]
If $\rho$ is close to $0$, $\lambda_i$ is also small. In this case, the first term on the right hand side of \eqref{eq_rho_sphere}  is negative and we can show that $\rho$ being close to $0$ is preserved along the flow.

Next, we compute the evolution equation of the third derivatives of $u$, which corresponds to the second fundamental forms of the Lagrangian submanifold defined by $du$.

Let \begin{equation}\label{eq_Theta}\Theta^2 = (G^{-1})^{ip}(G^{-1})^{jq}(G^{-1})^{kr}u_{;ijk}u_{;pqr}\end{equation}
 and \begin{equation}\label{eq_Upsilon}\Upsilon^2=(G^{-1})^{m s}(G^{-1})^{ip}(G^{-1})^{jq}(G^{-1})^{kr}u_{;ijkm}u_{;pqrs}.\end{equation}

 \begin{lem} Suppose $u$ is a solution of the evolution equation \eqref{eq_u-1} on a Riemannian manifold $(\Sigma, \sigma)$.
If the curvature tensor of $\Sigma$ is parallel, $\Theta^2$ defined in \eqref{eq_Theta} evolves by the following equation:
\begin{equation}\label{master_eq}
\begin{split} & \ \frac{\partial }{\partial t} \Theta^2-(G^{-1})^{m s} (\Theta^2)_{;ms} \\
=  & -2\Upsilon^2+\ 2 (G^{-1})^{ip}(G^{-1})^{jq}(G^{-1})^{kr}(G^{-1})^{m s}_{;jk}u_{ms i}u_{;pqr}\\
- &  (G^{-1})^{m s}  \left[ 2  (G^{-1})^{ip}_{;ms}(G^{-1})^{jq}(G^{-1})^{kr}
+(G^{-1})^{ip}(G^{-1})^{jq}(G^{-1})^{kr}_{;ms}\right]u_{;ijk}u_{;pqr}
+ \mbox{I}+ \mbox{II} + \mbox{III},\\
\end{split}
\end{equation}
where
\begin{equation}\label{I}\mbox{I} = \left[ 2\frac{\partial (G^{-1})^{ip}}{\partial t}(G^{-1})^{jq}(G^{-1})^{kr}+ (G^{-1})^{ip}(G^{-1})^{jq}\frac{\partial (G^{-1})^{kr}}{\partial t}\right] u_{;ijk}u_{;pqr}\;,\end{equation}
\begin{equation}\label{II}
\begin{split}
 \ \mbox{II}
= & \ -(G^{-1})^{m s} \left[ 2  (G^{-1})^{ip}_{;m}(G^{-1})^{jq}_{;s}(G^{-1})^{kr}+ 4 (G^{-1})^{ip}_{;m}(G^{-1})^{jq}(G^{-1})^{kr}_{;s}\right]
u_{;ijk}u_{;pqr}\\
 & \   -(G^{-1})^{m s} \left[ 8(G^{-1})^{ip}_{;s}(G^{-1})^{jq}(G^{-1})^{kr} + 4(G^{-1})^{ip}(G^{-1})^{jq}(G^{-1})^{kr}_{;s} \right] u_{;ijkm}u_{;pqr},
\end{split}
\end{equation}
and
\begin{equation}
\begin{split}
 \ \mbox{III}
=& \ 2(G^{-1})^{ip}(G^{-1})^{jq}(G^{-1})^{kr}u_{;pqr} \Big[(G^{-1})^{m s}_{;j}u_{;ms ik}+
(G^{-1})^{m s}_{;k}u_{;ms ij}\\
+ & \ (G^{-1})^{m s}(2u_{;ljs }C^l_{im k}
+2u_{;ils }C^l_{jm k}
+2u_{;lm k}C^l_{is j}+u_{; ilk}C^l_{ms j}
+u_{;ijl}C^l_{ms k}+u_{;ljk}C^l_{m  s i})
\Big].\\
\end{split}
\end{equation}

\end{lem}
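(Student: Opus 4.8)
The plan is to compute the two pieces $\frac{\partial}{\partial t}\Theta^2$ and $(G^{-1})^{ms}(\Theta^2)_{;ms}$ separately and then subtract. The one structural simplification I would exploit throughout is that the background metric $\sigma$ --- and hence its Christoffel symbols and its covariant derivative --- is independent of $t$; consequently $\frac{\partial}{\partial t}$ commutes with covariant differentiation, so from $\frac{\partial u}{\partial t}=\theta$ (equation \eqref{eq_u-1}, in the non-parametric parametrization) I obtain $\frac{\partial}{\partial t}u_{;ijk}=\theta_{;ijk}$ with no extra connection terms. This is what makes the whole computation tractable, and it is the only place the evolution equation enters directly.

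First I would differentiate $\Theta^2$ (defined in \eqref{eq_Theta}) in $t$ by the product rule. The three factors $(G^{-1})$ each contribute a term carrying $\frac{\partial}{\partial t}(G^{-1})$, which I would collect into $\mbox{I}$ after using $\frac{\partial}{\partial t}(G^{-1})^{ij}=-(G^{-1})^{ia}\big(\frac{\partial}{\partial t}G_{ab}\big)(G^{-1})^{bj}$ together with $\frac{\partial}{\partial t}G_{ab}=\theta_{;ak}\sigma^{kl}u_{;lb}+u_{;ak}\sigma^{kl}\theta_{;lb}$. The remaining contribution is $2(G^{-1})^{ip}(G^{-1})^{jq}(G^{-1})^{kr}\theta_{;ijk}u_{;pqr}$, into which I would substitute the third covariant derivative of $\theta$ obtained by differentiating Lemma \ref{theta_derivative} (i.e.\ $\theta_{;i}=(G^{-1})^{pq}u_{;pqi}$) twice more:
\[\theta_{;ijk}=(G^{-1})^{ab}u_{;abijk}+(G^{-1})^{ab}_{;j}u_{;abik}+(G^{-1})^{ab}_{;k}u_{;abij}+(G^{-1})^{ab}_{;jk}u_{;abi}.\]
Contracting the last term against $2(G^{-1})^{ip}(G^{-1})^{jq}(G^{-1})^{kr}u_{;pqr}$ reproduces exactly the displayed term $2(G^{-1})^{ip}(G^{-1})^{jq}(G^{-1})^{kr}(G^{-1})^{ms}_{;jk}u_{;msi}u_{;pqr}$; the two middle terms feed into $\mbox{II}$; and the leading term $(G^{-1})^{ab}u_{;abijk}$ is a fifth derivative of $u$ to be matched against the Laplacian piece.

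Next I would compute $(G^{-1})^{ms}(\Theta^2)_{;ms}$ by differentiating $\Theta^2$ twice covariantly and contracting. The genuinely second-order piece splits into the top term $2(G^{-1})^{ms}(G^{-1})^{ip}(G^{-1})^{jq}(G^{-1})^{kr}u_{;ijkms}u_{;pqr}$, the term $2\Upsilon^2$ coming from $u_{;ijkm}u_{;pqrs}$ (this is where $-2\Upsilon^2$ enters after subtraction), the pure second-derivative-of-$(G^{-1})$ terms (the line with $(G^{-1})^{ip}_{;ms}$ and $(G^{-1})^{kr}_{;ms}$), and a collection of mixed terms carrying one derivative on a $(G^{-1})$ and one extra derivative on a $u$. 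Subtracting from $\frac{\partial}{\partial t}\Theta^2$, the two fifth-order terms combine into $(G^{-1})^{ab}(G^{-1})^{ip}(G^{-1})^{jq}(G^{-1})^{kr}(u_{;abijk}-u_{;ijkab})u_{;pqr}$; iterating the curvature commutation formula \eqref{commutation} to move the pair $ab$ past $ijk$ rewrites this difference in terms of $C^l$ and lower derivatives of $u$, producing $\mbox{III}$. Because the curvature is assumed parallel, every term containing a covariant derivative of $C^l$ vanishes, which is precisely why $\mbox{III}$ contains only undifferentiated curvature. Finally, the mixed one-derivative terms from $\theta_{;ijk}$ and from $(\Theta^2)_{;ms}$ are gathered and should assemble, with the stated coefficients, into $\mbox{II}$.

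The main obstacle is the bookkeeping: there are many contractions of four copies of $(G^{-1})$ against third, fourth and fifth covariant derivatives of $u$, and the mixed terms must be collected with exactly the right numerical coefficients ($2$, $4$, $8$) to form $\mbox{II}$. The most delicate single step is the fifth-order commutator $u_{;abijk}-u_{;ijkab}$, which requires applying \eqref{commutation} repeatedly (commuting past three indices) and then relabeling carefully; tracking which curvature contractions survive after invoking the symmetries of $C^l$ and the parallel-curvature hypothesis is where errors are easiest to make. I would therefore organize the computation so that all fifth-order $u$-terms are isolated first (they either cancel or convert to curvature terms via \eqref{commutation}), the $\Upsilon^2$ term next, and the $(G^{-1})$-derivative terms last, so that $\mbox{I}$, $\mbox{II}$ and $\mbox{III}$ emerge by inspection rather than from a single undifferentiated expansion.
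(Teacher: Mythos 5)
Your proposal follows the paper's proof essentially verbatim: differentiate $\Theta^2$ in $t$ and in space, use that $\partial_t$ commutes with covariant differentiation so $\frac{\partial}{\partial t}u_{;ijk}=\theta_{;ijk}$, expand $\theta_{;ijk}=[(G^{-1})^{ms}u_{;msi}]_{;jk}$, and convert the fifth-order commutator $u_{;msijk}-u_{;ijkms}$ into curvature terms via \eqref{commutation} together with the parallel-curvature hypothesis. The only slip is one of labeling: the two middle terms $(G^{-1})^{ms}_{;j}u_{;msik}+(G^{-1})^{ms}_{;k}u_{;msij}$ coming from $\theta_{;ijk}$ end up inside $\mbox{III}$ as stated (its first two terms), not in $\mbox{II}$, which collects only the mixed one-derivative terms produced by the Laplacian of $\Theta^2$ --- a bookkeeping detail that does not affect the argument.
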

\begin{proof}

Recall that $u_{;ijk}$ is symmetric in the $i,j$ indexes.
A straightforward calculation using this symmetry gives
\begin{equation}\label{ddt_Theta^2}
\begin{split}
&  \frac{\partial }{\partial t} \Theta^2\\
=& 2 \frac{\partial (G^{-1})^{ip}}{\partial t}(G^{-1})^{jq}(G^{-1})^{kr}u_{;ijk}u_{;pqr}+ (G^{-1})^{ip}(G^{-1})^{jq}\frac{\partial (G^{-1})^{kr}}{\partial t}u_{;ijk}u_{;pqr}\\
+& 2 (G^{-1})^{ip}(G^{-1})^{jq}(G^{-1})^{kr}\theta_{;ijk}u_{;pqr}
\end{split}
\end{equation}
and
\begin{equation}\label{Laplace_Theta^2}
\begin{split}
& \ (G^{-1})^{m s} (\Theta^2)_{;ms}\\
= & \ 2 (G^{-1})^{m s}(G^{-1})^{ip}(G^{-1})^{jq}(G^{-1})^{kr}u_{;ijkms}u_{;pqr}\\
+&2 (G^{-1})^{m s}(G^{-1})^{ip}(G^{-1})^{jq}(G^{-1})^{kr}u_{;ijkm}u_{;pqrs}\\
+ & \ (G^{-1})^{m s} \left[ 2  (G^{-1})^{ip}_{;ms}(G^{-1})^{jq}(G^{-1})^{kr}
+(G^{-1})^{ip}(G^{-1})^{jq}(G^{-1})^{kr}_{;ms}\right]u_{;ijk}u_{;pqr}\\
+ & \ (G^{-1})^{m s} \left[ 2  (G^{-1})^{ip}_{;m}(G^{-1})^{jq}_{;s}(G^{-1})^{kr}+ 4 (G^{-1})^{ip}_{;m}(G^{-1})^{jq}(G^{-1})^{kr}_{;s}\right]
u_{;ijk}u_{;pqr}
\\
+ & \ (G^{-1})^{m s}\left[8(G^{-1})^{ip}_{;s}(G^{-1})^{jq}(G^{-1})^{kr} + 4(G^{-1})^{ip}(G^{-1})^{jq}(G^{-1})^{kr}_{;s} \right]u_{;ijkm}u_{;pqr}.\\
\end{split}
\end{equation}

Subtracting \eqref{Laplace_Theta^2} from \eqref{ddt_Theta^2} and regrouping terms, we derive
\begin{equation}
\begin{split}
& \ \frac{\partial }{\partial t} \Theta^2-(G^{-1})^{m s} (\Theta^2)_{;ms} \\
=  & -2\Upsilon^2+ \ 2 (G^{-1})^{ip}(G^{-1})^{jq}(G^{-1})^{kr}(\theta_{;ijk}-(G^{-1})^{m s}u_{;ijkms} )u_{;pqr}\\
 & -(G^{-1})^{m s}  \left[ 2  (G^{-1})^{ip}_{;ms}(G^{-1})^{jq}(G^{-1})^{kr}
+(G^{-1})^{ip}(G^{-1})^{jq}(G^{-1})^{kr}_{;ms}\right]u_{;ijk}u_{;pqr}
+ \mbox{I}+ \mbox{II}. \\
\end{split}
\end{equation}

Note that
\begin{equation}\label{theta_ijk}
\begin{split}
\theta_{;ijk}=& [(G^{-1})^{ms}u_{;ms i}]_{;jk}\\
=& (G^{-1})^{ms}u_{;ms ijk}+(G^{-1})^{m s}_{;jk}u_{;ms i}+ (G^{-1})^{m s}_{;j}u_{;ms ik}+
(G^{-1})^{m s}_{;k}u_{;ms ij}.\\
\end{split}
\end{equation}

Using  commutation formulae in equation \eqref{commutation}, we obtain, under the assumption of parallel curvature tensor,
\begin{eqnarray*}
& &u_{;ms ijk} \\
&=&   u_{;ijm ks   } + u_{;ljm}C^l_{is k}  + u_{;ilm}C^l_{js k}+  u_{;ij\l}C^l_{ms k}\\
&+& u_{;ls k}C^l_{im j}
+u_{;lm k}C^l_{ i s j }+u_{;ilk}C^l_{ m s j }
+u_{;ljk}C^l_{m  s i}. \\
\end{eqnarray*}
In addition,
\[ u_{;ijm ks  }
= u_{;ijkms   }
+u_{;lj s}C^l_{im k}
+u_{;il s}C^l_{jm k}.\] Thus
\begin{equation}\begin{split}\label{commute_4}
 &u_{;ms ijk}- u_{;ijkms   }\\
=&   u_{;lj s}C^l_{im k}
+u_{;il s}C^l_{jm k}
+ u_{;ljm}C^l_{is k}  + u_{;ilm}C^l_{js k}+  u_{;ij\l}C^l_{ms k}\\
+& u_{;ls k}C^l_{im j}
+u_{;lm k}C^l_{ i s j } + u_{;ilk}C^l_{ m s j }
+u_{;ljk}C^l_{m  s i}. \\
\end{split}\end{equation}

Combing \eqref{theta_ijk} and \eqref{commute_4}, we obtain
\begin{equation}
\begin{split}
& \ (G^{-1})^{ip}(G^{-1})^{jq}(G^{-1})^{kr}(\theta_{;ijk}-(G^{-1})^{m s}u_{;ijkms})u_{;pqr} \\
= & \ (G^{-1})^{ip}(G^{-1})^{jq}(G^{-1})^{kr}\Big[  (G^{-1})^{m s}_{;jk}u_{;ms i}+ (G^{-1})^{m s}_{;j}u_{;ms ik}+
(G^{-1})^{m s}_{;k}u_{;ms ij}\\
+ & \ (G^{-1})^{m s}(2u_{;ljs }C^l_{im k}
+2u_{;ils }C^l_{jm k}
+2u_{;lm k}C^l_{is j}+u_{; ilk}C^l_{ms j}
+u_{;ijl}C^l_{ms k}+u_{;ljk}C^l_{m  s i})\Big] u_{;pqr}.
\end{split}
\end{equation}
\end{proof}

 In the rest of this section, we estimate the right hand side of \eqref{master_eq}. We introduce another
 geometric quantity $\Lambda\geq 0$ to measure the Hessian of $u$:
 \begin{equation}\label{Lambda}
 \Lambda^2=\sigma^{ij}\sigma^{kl} u_{;ik}u_{;jl}
 \end{equation}
 We prove the following differential inequality.
\begin{pro} Suppose $u$ is a solution of the evolution equation \eqref{eq_u-1} on a Riemannian manifold $(\Sigma, \sigma)$.
If the curvature tensor of $\Sigma$ is parallel, $\Theta^2$ defined in \eqref{eq_Theta} satisfies the following inequality:
\begin{equation}\label{diff_ineq}\frac{\partial}{\partial t} \Theta^2-(G^{-1})^{ms}(\Theta^2)_{;ms}\leq
-\Upsilon^2 +
C_1 (1+\Lambda^2) \Theta^4+C_2 \Theta^2,\end{equation} where
$\Lambda$ is defined in \eqref{Lambda}
and $C_1$ and $C_2$  are constants that depend only
on the dimension of $\Sigma$.
\end{pro}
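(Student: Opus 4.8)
The plan is to treat the exact evolution equation \eqref{master_eq} as the starting point and to prove the inequality purely by estimating its right-hand side term by term, classifying the terms according to how many fourth-order derivatives of $u$ they contain. The natural scales are the Hessian quantity $\Lambda$ from \eqref{Lambda}, the third-derivative quantity $\Theta$ from \eqref{eq_Theta}, and the fourth-derivative quantity $\Upsilon$ from \eqref{eq_Upsilon}. The single structural fact that drives everything is that $G_{ij}=\sigma_{ij}+u_{;ik}\sigma^{kl}u_{;lj}\geq\sigma_{ij}$ as a quadratic form, so the eigenvalues of $(G^{-1})^{ij}$ lie in $(0,1]$; in the diagonalizing frame \eqref{diagonal} this is the statement $(1+\lambda_i^2)^{-1}\le 1$. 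Consequently $(G^{-1})^{ij}$ is uniformly bounded, the operator $\partial_t-(G^{-1})^{ms}\nabla^2_{ms}$ is uniformly elliptic, and $\Theta$ controls the $(G^{-1})$-weighted norm of $u_{;ijk}$. I would also record the elementary bounds $|\lambda_i|/(1+\lambda_i^2)\le\tfrac12$ and $|\lambda_i|\le\Lambda$, which are exactly what convert an unweighted Hessian factor into either a bounded constant or a factor of $\Lambda$.

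The next step is to differentiate the defining relations once, twice, and in $t$. From $G_{ij;m}=u_{;ikm}\sigma^{kl}u_{;lj}+u_{;ik}\sigma^{kl}u_{;ljm}$ and $(G^{-1})^{ij}_{;m}=-(G^{-1})^{ia}G_{ab;m}(G^{-1})^{bj}$ I would read off that a first derivative of $G^{-1}$ costs one third-derivative factor with a Hessian attached, hence is of size $O(\Lambda)\,|u_{;\cdot\cdot\cdot}|$ once the free indices are paired with $(G^{-1})$. Differentiating again, and using $\partial_t u_{;ij}=\theta_{;ij}$ together with $\theta_{;i}=(G^{-1})^{pq}u_{;pqi}$ from \eqref{eq_dtheta}, I would similarly expand $(G^{-1})_{;ms}$ and $\partial_t(G^{-1})$. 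The essential observation here is that $\partial_t(G^{-1})$ and $(G^{-1})_{;ms}$ involve fourth-order derivatives of $u$—through $\theta_{;ij}$ and through $u_{;\cdot\cdot ms}$ in $G_{;ms}$—but \emph{only linearly}, so every such appearance is linear in $\Upsilon$. Since the curvature is assumed parallel, all $C^l_{\cdots;k}$ terms drop out, which removes the only terms in $\Xi_1$ and in \eqref{master_eq} that would otherwise involve $\nabla C$.

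With these estimates in hand I would run through \eqref{master_eq}. The term $-2\Upsilon^2$ is the good negative term; I keep $-\Upsilon^2$ and spend the remaining $\Upsilon^2$ as a budget. Every term that is linear in a fourth derivative—the $u_{;ijkm}$ terms in \eqref{II}, the fourth-order part of $\mbox{I}$ coming from $\partial_t(G^{-1})$, the second-derivative-of-$G^{-1}$ terms in the two explicitly written lines of \eqref{master_eq}, and the $(G^{-1})^{ms}_{;j}u_{;msik}$ term in $\mbox{III}$—is bounded by $c(1+\Lambda)\,\Upsilon\,\Theta^2$; Young's inequality absorbs each such term into an arbitrarily small multiple of $\Upsilon^2$ plus a term $\le C_1(1+\Lambda^2)\Theta^4$, and choosing the multiples so that they sum to at most $\Upsilon^2$ exhausts the budget. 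Every remaining term carries no fourth derivative: those built from the cubic part of $\partial_t(G^{-1})$, from $(G^{-1})_{;m}(G^{-1})_{;s}$, and from the cubic part of $(G^{-1})_{;ms}$ are quartic in $u_{;\cdot\cdot\cdot}$ and, after extracting the Hessian factors via $|\lambda_i|\le\Lambda$, are $\le C_1(1+\Lambda^2)\Theta^4$, while the genuine curvature terms in $\mbox{III}$ are quadratic in $u_{;\cdot\cdot\cdot}$ with coefficient controlled by the (fixed, bounded) curvature of $\Sigma$ and hence $\le C_2\Theta^2$. Summing the three groups yields \eqref{diff_ineq}.

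The main obstacle I anticipate is purely organizational: the right-hand side of \eqref{master_eq} is long, and the estimates are sharp only if one keeps careful track, in each contraction, of whether a Hessian index is sandwiched between two copies of $(G^{-1})$—in which case it contributes the bounded factor $|\lambda_i|/(1+\lambda_i^2)\le\tfrac12$—or is instead contracted through $\sigma$ into a derivative, in which case it contributes a factor of $\Lambda$. The delicate point to verify is that no term is quadratic in fourth derivatives except $-2\Upsilon^2$ itself, so that a single application of the $\Upsilon^2$ budget suffices; working in the frame \eqref{diagonal} and using that the weights $(1+\lambda_i^2)^{-1}$ attached to the contracted indices exactly compensate the growth of the Hessian factors is what makes each of these bounds go through.
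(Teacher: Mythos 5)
Your proposal is correct and follows essentially the same route as the paper: start from \eqref{master_eq}, use the bounds $|(G^{-1})^{kl}u_{;lj}|\le\delta^k_j$ and $|u^{\,i}_{;\,j}|\le\Lambda\delta^i_j$ in the frame \eqref{diagonal} to show each derivative of $G^{-1}$ (and $\partial_t G^{-1}$) costs at most a factor of $\Lambda$ times third derivatives plus a term linear in fourth derivatives, group the right-hand side into terms of type $(1+\Lambda^2)\Theta^4$, $(1+\Lambda)\Upsilon\Theta^2$, and $\Theta^2$, and absorb the linear-in-$\Upsilon$ terms into $-2\Upsilon^2$ by Cauchy--Schwarz, retaining $-\Upsilon^2$. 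The only slip is attributing the $\nabla C$ cancellation to $\Xi_1$ (which belongs to the $\rho$ equation, not \eqref{master_eq}); this is immaterial since \eqref{master_eq} is already stated under the parallel-curvature hypothesis.
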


\begin{proof}

From \[G_{ij}=\sigma_{ij}+ u_{;ik}\sigma^{kl}u_{;lj}\;,\] we compute that
\begin{equation}\begin{split}\label{one_der_G-1}
(G^{-1})^{pq}_{;\,\,\,k}
=& -(G^{-1})^{pr}(u_{;rmk} u^m_{;s}+u^m_{;r} u_{;msk})(G^{-1})^{sq}\\
=& -u_{;s}^m u_{;mrk}\Big[(G^{-1})^{pr}(G^{-1})^{sq}+(G^{-1})^{ps}(G^{-1})^{rq}\Big].
\end{split}\end{equation}

Taking one more derivative, we derive:
\begin{equation}\label{3rd-der-1}
\begin{split}
 \ (G^{-1})^{pq}_{;\,\,\,kj}
= & \ -{u_{;sj}^m} u_{;mrk}\Big[(G^{-1})^{pr}(G^{-1})^{sq}+(G^{-1})^{ps}(G^{-1})^{rq}\Big]\\
- & \ u_{;s}^m \Big\{ u_{;mrk}\Big[(G^{-1})^{pr}(G^{-1})^{sq}+(G^{-1})^{ps}(G^{-1})^{rq}\Big]\Big\}_{;j}.
\end{split}
\end{equation}

On the other hand, using $\frac{\partial u_{;ij}}{\partial t}=\theta_{;ij},$ we compute
\[\frac{\partial G_{ij}}{\partial t}
= \theta_{;ik}\sigma^{kl}u_{;lj} +u_{;ik}\sigma^{kl}\theta_{;lj}\;.
\] Differentiating $\theta_{;i}=(G^{-1})^{pq} u_{;pqi}$ one more time gives
\begin{equation}\label{theta-2der}\theta_{;ik}=(G^{-1})^{pq}_{;k} u_{;pqi}+(G^{-1})^{pq} u_{;pqik}.
\end{equation}

Within this section, for any positive integer $i$, $C_i$ denotes a positive constant that depends only on the dimension $n$. At any point where \eqref{diagonal} holds true $\Lambda=\sqrt{\sum_{i=1}^n\lambda_i^2}$ and
\begin{equation}\begin{split} |u^{\,i}_{;\,\,j}|&\leq \Lambda \delta^i_j, \text{  for any} i, j\\
|(G^{-1})^{kl}u_{;lj}|&=|\frac{\lambda_k}{1+\lambda_k^2}\delta^k_j|\leq \delta^k_j  \text{ for any} k, j\end{split}\end{equation}

From \eqref{one_der_G-1} and \eqref{theta-2der}, we have
$$|(G^{-1})^{pq}_{;\,\,\,k}| \leq
2\Lambda |u_{;srk}(G^{-1})^{ps}(G^{-1})^{qr}|,$$
$$|\theta_{;ik}| \leq 2 \Lambda |(G^{-1})^{ps}(G^{-1})^{qr}    u_{;pqi} u_{;srk}|+ |(G^{-1})^{pq}u_{;pqik}|,$$
and $|\frac{\partial G_{ij}}{\partial t}|\leq 2 |\theta_{;ij}|.$
Thus
\begin{equation}
\begin{split} &
| 2\frac{\partial (G^{-1})^{ip}}{\partial t}(G^{-1})^{jq}(G^{-1})^{kr} u_{;ijk}u_{;pqr}|\\
\leq & 4 (G^{-1})^{ir}(G^{-1})^{sp}(G^{-1})^{jq}(G^{-1})^{kr}|\theta_{;rs}| |u_{;ijk}||u_{;pqr}|\\
\leq  & C_1 \Lambda \Theta^4 + C_2 \Theta^2 \Upsilon
\end{split}
\end{equation}
for some constants $C_1, C_2$ depending only on $n$.
Similarly,
$$(G^{-1})^{ip}(G^{-1})^{jq}\frac{\partial (G^{-1})^{kr}}{\partial t}u_{;ijk}u_{;pqr}
\leq C_1 \Lambda \Theta^4 + C_2 \Theta^2 \Upsilon.$$

Thus $|I| \leq C_1 \Lambda \Theta^4 + C_2 \Theta^2 \Upsilon$.

Similarly, we have
$$|II| \leq C_3 \Lambda^2 \Theta^4 + C_4 \Lambda\Theta^2 \Upsilon.$$
and
$$|III| \leq C_5 \Lambda \Theta^2\Upsilon  + C_6 \Theta^2.$$
Using $|(G^{-1})^{pq}_{;\,\,\,k}| \leq
2 \Lambda |u_{;srk}(G^{-1})^{ps}(G^{-1})^{qr}|$, $|u_{;s}^m|\leq \Lambda\delta_{s}^m$
and \eqref{3rd-der-1}, we have
\begin{equation}
\begin{split}
 & |2 (G^{-1})^{ip}(G^{-1})^{jq}(G^{-1})^{kr}(G^{-1})^{m s}_{;jk}u_{ms i}u_{;pqr}\\
- &  (G^{-1})^{m s}  \left[ 2  (G^{-1})^{ip}_{;ms}(G^{-1})^{jq}(G^{-1})^{kr}
+(G^{-1})^{ip}(G^{-1})^{jq}(G^{-1})^{kr}_{;ms}\right]u_{;ijk}u_{;pqr}|\\
\leq & C_7(1+\Lambda^2)\Theta^4 +C_7 \Lambda \Upsilon\Theta^2
\end{split}
\end{equation}

The right hand side of \eqref{master_eq} can thus be bounded from above by
\[-2\Upsilon^2+ C_{14}(1+\Lambda^2) \Theta^4+ C_{15}(1+ \Lambda)\Upsilon \Theta^2+ C_{16}\Theta^2.\]
The claim \eqref{diff_ineq} follows from this and the Cauchy-Schwarz inequality.
\end{proof}

\section{Proof of Theorem 3}

We give the precise statement of Theorem 3:

\noindent {\bf Theorem 3}
{\it When $(\Sigma, \sigma)$ is a standard round sphere of constant sectional curvature, the zero section in $T^*\Sigma$ is stable under the generalized Lagrangian mean curvature flow. Suppose a Lagrangian submanifold $M_0$ is the graph of $du$ for a smooth
function $u$ on $\Sigma$ and let $\lambda_i$ be the eigenvalues of the Hessian of $u$ with respect to $\sigma$. There exists a constant $\epsilon$ depending only on $n$ and the curvature of $\Sigma$ such that if  $\prod_{i=1}^n(1+\lambda_i^2)\leq 1+\epsilon$,  then generalized Lagrangian mean curvature flow of $M_0$ exists smoothly for all time,
and converges to the zero section smoothly at infinity.}

\begin{proof}
Let $\chi = \frac{\det G_{ij}}{\det {\sigma_{ij}}}= \prod_{i=1}^n(1+\lambda_i^2)$.
From the condition $\chi \leq 1+\epsilon$,
we have  $\Lambda^2= \sum_i\lambda_i^2 \leq \epsilon$ and $\lambda_i\lambda_j\leq \epsilon$ for $1\leq i, j \leq n$.
Since the section curvature of $\sigma$ is positive and the curvature tensor is parallel,  the evolution equation of $\rho$ in  \eqref{eq_rho2} implies
hat the condition $\chi\leq 1+\epsilon$ is preserved by the generalized Lagrangian mean curvature flow if $3\epsilon \leq 1$.

In particular, by assuming $3\epsilon \leq 1$, we obtain the following differential inequality along the flow:
\[\frac{\partial \rho}{\partial t}-(G^{-1})^{kl}\rho_{;kl} \leq (-1+3 \epsilon)\Theta^2.\]
In the following calculation, we denote $\nabla_G f\cdot \nabla_G g=(G^{-1})^{kl} f_{;k} g_{;l}$ and
$|\nabla_G f|^2=(G^{-1})^{kl} f_{;k} f_{;l}$ for functions $f$ and $g$ defined on $\Sigma$.
With $\rho= \frac{1}{2}\ln \chi$, the last inequality can be turned into a differential inequality of $\chi$:
\begin{equation}\label{chi-evo}\frac{\partial \chi}{\partial t}-(G^{-1})^{kl}\chi_{;kl}\leq 2(-1+3 \epsilon)\chi\Theta^2- \frac{|\nabla_G \chi|^2 }{\chi}.
\end{equation}
Since $\Lambda^2\leq \epsilon$,
we  have
\begin{equation}\label{theta-evo}\frac{\partial}{\partial t} \Theta^2-(G^{-1})^{kl}(\Theta^2)_{;kl}
\leq -\Upsilon^2+ C_{1}(1+\epsilon) \Theta^4+ C_{2}\Theta^2\end{equation}
from \eqref{diff_ineq}.
Let $p$ be a positive number to be determined, we compute:
\begin{equation*}
\begin{split}
 \ & \frac{\partial }{\partial t}\Big(\chi^p\Theta^2\Big)-(G^{-1})^{kl}(\chi^p\Theta^2) _{;kl}\\
= & \ p\chi^{p-1}\Theta^2\Big(\frac{\partial \chi}{\partial t}-(G^{-1})^{kl}\chi_{;kl}\Big)
+\chi^p \Big(\frac{\partial}{\partial t} \Theta^2-(G^{-1})^{kl}(\Theta^2)_{;kl}\Big) \\
- & \ p(p-1)\chi^{p-2}\Theta^2|\nabla_G\chi|^2-2p\chi^{p-1}\nabla_G\chi \cdot \nabla_G(\Theta^2).
\end{split}
\end{equation*}
Using \eqref{chi-evo} and \eqref{theta-evo} in the above equation, we obtain
\begin{equation*}
\begin{split}
 \ & \frac{\partial }{\partial t}\Big(\chi^p\Theta^2\Big)-(G^{-1})^{kl}(\chi^p\Theta^2) _{;kl}\\
\leq & \ 2(-1+3 \epsilon)p\chi^{p}\Theta^4- p^2\chi^{p-2}\Theta^2|\nabla_G \chi|^2-2p\chi^{p-1}\nabla_G\chi \cdot \nabla_G(\Theta^2)\\
+ &\chi^p \Big(-\Upsilon^2+ C_{1}(1+\epsilon)  \Theta^4+ C_{2}\Theta^2\Big) \\
\leq & \ -2p \nabla_G (\chi^p\Theta^2) \cdot \nabla_G\ln \chi+ p^2\chi^{p-2}\Theta^2|\nabla_G\chi|^2\\
 + &  \Big(2(-1+3 \epsilon)p +C_1 (1+\epsilon) \Big)\chi^{p}\Theta^4 + C_2 \chi^p\Theta^2.
\end{split}
\end{equation*}
Note that we used $$ \nabla_G (\chi^p\Theta^2) \cdot \nabla_G\ln \chi= p\chi^{p-2}\Theta^2 |\nabla_G \chi|^2+ \chi^{p-1}\nabla_G\chi \cdot \nabla_G(\Theta^2).$$
Recall that \[\rho_{;k}=\frac{1}{2} (G_{ij})_{;k} (G^{-1})^{ij}
=\sum_i\frac{\lambda_i u_{;iik}}{1+\lambda_i^2}\] and
\[|\nabla_G \rho|^2=\sum_{i,j,k}\frac{\lambda_i\lambda_ju_{;iik}u_{;jjk} }{(1+\lambda_i^2)(1+\lambda_j^2)(1+\lambda_k^2)}
\leq  \frac{\epsilon}{2}\sum_{i,j,k}\frac{u_{;iik}^2+u_{;jjk}^2 }{(1+\lambda_i^2)(1+\lambda_j^2)(1+\lambda_k^2)}
\leq \epsilon \Theta^2.
\]
Thus $p^2\chi^{p-2}\Theta^2|\nabla_G\chi|^2=4p^2\chi^{p}\Theta^2|\nabla_G\rho|^2\leq 4p^2\epsilon^2\chi^{2p}\Theta^4$
where we have also used the fact that $1 \leq \chi$ .
This implies that
\begin{equation*}
\begin{split}
 \ & \frac{\partial }{\partial t}\Big(\chi^p\Theta^2\Big)-(G^{-1})^{kl}(\chi^p\Theta^2) _{;kl}\\
\leq & \ -2p \nabla_G (\chi^p\Theta^2) \cdot \nabla_G\ln \chi+ \Big(4p^2\epsilon^2+2(-1+3 \epsilon)p +C_1(1+\epsilon)  \Big)\chi^{2p}\Theta^4 + C_2 \chi^p\Theta^2.
\end{split}
\end{equation*}
Choose $\epsilon$ small enough so that  $(-1+3 \epsilon)^2-4C_1\epsilon^2(1+\epsilon)>0$
and $1-3 \epsilon > 0$.
Then we can find $p > 0$  so that $4p^2\epsilon^2+2(-1+3 \epsilon)p +C_1(1+\epsilon)$  is negative.
The maximum principle implies that $\chi^p\Theta^2$ is uniformly bounded.
Hence $\Theta^2$ is unformly bounded based on the fact that $\chi \geq 1$.
Standard arguments imply that the higher order derivatives of $u$ are also bounded.
This proves the long time existence and convergence of the generalized Lagrangian mean curvature flow.
Using Proposition \ref{eta-ineq}, $c=1$ and $\sum_i\lambda_i^2 \leq \epsilon$, we have
$$\frac{\partial}{\partial t}\vartheta\le (G^{-1})^{ij} \vartheta_{;ij}
- \frac{2c(n-1)}{1+\epsilon^2}\vartheta
$$ and $\vartheta \leq (\max_{t=0}\vartheta)\cdot e^{\frac{-2(n-1)t}{1+\epsilon^2}}$ or $\vartheta=\sigma^{ij} u_{;i} u_{;j}$ is sub-exponential decay.
This shows that the section $du$ converges to the zero section.
\end{proof}

Finally, we remark that the stability theorem (Theorem 3) holds true when the sphere is replaced by a compact Riemannian manifold of positive sectional curvature. Lemma 6.4 needs to be modified to accommodate the covariant derivatives of the curvature tensor. However, the contribution is of lower order, and Theorem 3 still holds, except that the constant $\epsilon$ depends on the covariant
derivatives of the curvature as well.

\begin{bibdiv}
\begin{biblist}

\bib{Arnold}{article}{
   author={Arnol{\cprime}d, V.},
   title={First steps of symplectic topology},
   conference={
      title={VIIIth international congress on mathematical physics
      (Marseille, 1986)},
   },
   book={
      publisher={World Sci. Publishing},
      place={Singapore},
   },
   date={1987},
   pages={1--16},
}

\bib{Be}{article}{
   author={Behrndt, Tapio},
   title={Generalized Lagrangian mean curvature flow in K\"ahler manifolds that are almost Einstein.},
   conference={
   title={Complex and differential geometry, Springer Proc. Math., 8.}} 
   book={publisher={Springer}, place={Heidelberg}},     
   date={2011}, 
   pages={65--79},
   }

\bib{cch}{article}{
    AUTHOR = {Chau, Albert},
    AUTHOR = {Chen, Jingyi},
    AUTHOR = {He, Weiyong},
     TITLE = {Lagrangian mean curvature flow for entire {L}ipschitz graphs},
   JOURNAL = {Calc. Var. Partial Differential Equations},
  FJOURNAL = {Calculus of Variations and Partial Differential Equations},
    VOLUME = {44},
      YEAR = {2012},
    NUMBER = {1-2},
     PAGES = {199--220},
      ISSN = {0944-2669},
   MRCLASS = {53C44 (35K55)},
  MRNUMBER = {2898776},
MRREVIEWER = {Robert Haslhofer},
       DOI = {10.1007/s00526-011-0431-x},
       URL = {http://dx.doi.org/10.1007/s00526-011-0431-x},
}

\bib{ccy}{article}{
    AUTHOR = {Chau, Albert},
   AUTHOR = {Chen, Jingyi},
   AUTHOR = {Yuan, Yu},
     TITLE = {Lagrangian mean curvature flow for entire {L}ipschitz graphs
              {II}},
   JOURNAL = {Math. Ann.},
  FJOURNAL = {Mathematische Annalen},
    VOLUME = {357},
      YEAR = {2013},
    NUMBER = {1},
     PAGES = {165--183},
      ISSN = {0025-5831},
   MRCLASS = {53C44 (35A01 35B65 35K15 35K55)},
  MRNUMBER = {3084345},
MRREVIEWER = {Robert Haslhofer},
       DOI = {10.1007/s00208-013-0897-2},
       URL = {http://dx.doi.org/10.1007/s00208-013-0897-2},
}

\bib{fss}{article}{
   author={Fukaya, Kenji},
   author={Seidel, Paul},
   author={Smith, Ivan},
   title={Exact Lagrangian submanifolds in simply-connected cotangent
   bundles},
   journal={Invent. Math.},
   volume={172},
   date={2008},
   number={1},
   pages={1--27},
   issn={0020-9910},
}

\bib{Gromov}{article}{
   author={Gromov, M.},
   title={Pseudoholomorphic curves in symplectic manifolds},
   journal={Invent. Math.},
   volume={82},
   date={1985},
   number={2},
   pages={307--347},
   issn={0020-9910},
}

\bib{Jacob-Yau}{article}{
   author={Jacob, Adam},
   author={Yau, Shing-Tung}
   title={A special Lagrangian type equation for holomorphic line bundles},
   journal={arXiv:1411.7457},
   volume={},
   date={},
   number={},
   pages={},
}

\bib{Krylov}{book}{
   author={Krylov, N. V.},
   title={Nonlinear elliptic and parabolic equations of the second order},
   series={Mathematics and its Applications (Soviet Series)},
   volume={7},
   note={Translated from the Russian by P. L. Buzytsky},
   publisher={D. Reidel Publishing Co., Dordrecht},
   date={1987},
   pages={xiv+462},
   isbn={90-277-2289-7},
   review={\MR{901759 (88d:35005)}},
   doi={10.1007/978-94-010-9557-0},
}

\bib{Medos-Wang}{article}{
   author={Ivana Medo\v s},
   author={Wang, Mu-Tao}
   title={Deforming symplectomorphisms of complex projective spaces by the mean curvature flow},
   journal={J. Differential Geom.},
   volume={87},
   date={2011},
   number={2},
   pages={309--341},
}

\bib{Lalonde-Sikorav}{article}{
   author={Lalonde, Fran{\c{c}}ois},
   author={Sikorav, Jean-Claude},
   title={Sous-vari\'et\'es lagrangiennes et lagrangiennes exactes des
   fibr\'es cotangents},
   language={French},
   journal={Comment. Math. Helv.},
   volume={66},
   date={1991},
   number={1},
   pages={18--33},
   issn={0010-2571},
}

\bib{ne1}{article}{
   author={Neves, Andr{\'e}},
   title={Singularities of Lagrangian mean curvature flow: zero-Maslov class
   case},
   journal={Invent. Math.},
   volume={168},
   date={2007},
   number={3},
   pages={449--484},
   issn={0020-9910},
}

\bib{ne2}{article}{
    AUTHOR = {Neves, Andr{\'e}},
     TITLE = {Finite time singularities for {L}agrangian mean curvature
              flow},
   JOURNAL = {Ann. of Math. (2)},
  FJOURNAL = {Annals of Mathematics. Second Series},
    VOLUME = {177},
      YEAR = {2013},
    NUMBER = {3},
     PAGES = {1029--1076},
      ISSN = {0003-486X},
     CODEN = {ANMAAH},
   MRCLASS = {53C44 (53D12)},
  MRNUMBER = {3034293},
MRREVIEWER = {Roberto Giamb{\`o}},
       DOI = {10.4007/annals.2013.177.3.5},
       URL = {http://dx.doi.org/10.4007/annals.2013.177.3.5},
}

\bib{ne3}{incollection}{
    AUTHOR = {Neves, Andr{\'e}},
     TITLE = {Recent progress on singularities of {L}agrangian mean
              curvature flow},
 BOOKTITLE = {Surveys in geometric analysis and relativity},
    SERIES = {Adv. Lect. Math. (ALM)},
    VOLUME = {20},
     PAGES = {413--438},
 PUBLISHER = {Int. Press, Somerville, MA},
      YEAR = {2011},
   MRCLASS = {53C44},
  MRNUMBER = {2906935},
MRREVIEWER = {James McCoy},
}

\bib{Sasaki}{article}{
   author={Sasaki, Shigeo},
   title={On the differential geometry of tangent bundles of Riemannian manifolds.},
   journal={Tohoku Math. J.},
   volume={10},
   date={1958},
   number={2},
   pages={338--354},
}
\bib{sw1}{article}{
   author={Smoczyk, Knut},
   author={Wang, Mu-Tao},
   title={Mean curvature flows of Lagrangian submanifolds with convex
   potentials},
   journal={J. Differential Geom.},
   volume={62},
   date={2002},
   number={2},
   pages={243--257},
   issn={0022-040X},
}

\bib{sw2}{article}{
   author={Smoczyk, Knut},
   author={Wang, Mu-Tao},
   TITLE = {Generalized {L}agrangian mean curvature flows in symplectic
              manifolds},
   JOURNAL = {Asian J. Math.},
     VOLUME = {15},
      YEAR = {2011},
    NUMBER = {1},
     PAGES = {129--140},
      ISSN = {1093-6106},
}

%

\bib{tw}{article}{
   author={Tsui, Mao-Pei},
   author={Wang, Mu-Tao},
   title={Mean curvature flows and isotopy of maps between spheres},
   journal={Comm. Pure Appl. Math.},
   volume={57},
   date={2004},
   number={8},
   pages={1110--1126},
   issn={},
   review={\MR{2053760}},
}

\bib{va}{book}{
   author={Vaisman, Izu},
   title={Symplectic geometry and secondary characteristic classes},
   series={Progress in Mathematics},
   volume={72},
   publisher={Birkh\"auser Boston Inc.},
   place={Boston, MA},
   date={1987},
   pages={x+215},
   isbn={0-8176-3356-1},
}

\bib{wa1}{article}{
   author={Wang, Mu-Tao},
   title={Mean curvature flow of surfaces in Einstein four-manifolds},
   journal={J. Differential Geom.},
   volume={57},
   date={2001},
   number={2},
   pages={301--338},
   issn={0022-040X},
}

\bib{wa2}{article}{
   author={Wang, Mu-Tao},
   title={Long-time existence and convergence of graphic mean curvature flow
   in arbitrary codimension},
   journal={Invent. Math.},
   volume={148},
   date={2002},
   number={3},
   pages={525--543},
   issn={0020-9910},
   review={\MR{1908059 (2003b:53073)}},
}		

\bib{yi}{book}{
   author={Yano, Kentaro},
   author={Ishihara, Shigeru},
   title={Tangent and cotangent bundles: differential geometry},
   note={Pure and Applied Mathematics, No. 16},
   publisher={Marcel Dekker Inc.},
   place={New York},
   date={1973},
   pages={ix+423},
}		

\bib{Zhang}{book}{
   author={Zhang, Xiangwen},
   title={Mean curvature flow for Lagrangian submanifolds with convex
   potentials},
   note={Master thesis, McGill University},
   date={2008},
}	

\end{biblist}
\end{bibdiv}

\end{document}